\documentclass[11pt]{article}
\usepackage[margin=1in]{geometry}
\usepackage{amsmath,amssymb,amsthm,mathtools,bm}
\usepackage{enumitem}
\usepackage{hyperref}
\usepackage{booktabs}
\usepackage{graphicx}
\usepackage{float}
\usepackage{microtype}
\usepackage{xcolor}

\hypersetup{colorlinks=true,linkcolor=blue,citecolor=blue,urlcolor=blue}

\newtheorem{theorem}{Theorem}[section]
\newtheorem{proposition}[theorem]{Proposition}
\newtheorem{lemma}[theorem]{Lemma}

\newtheorem{definition}[theorem]{Definition}

\theoremstyle{remark}
\newtheorem{remark}[theorem]{Remark}
\newtheorem{example}[theorem]{Example}

\title{A Circular Chatterjee's Correlation Coefficient}
\author{Sourav Majumdar\\
Department of Management Sciences, Indian Institute of Technology Kanpur\\
\texttt{souravm@iitk.ac.in}}
\date{}

\begin{document}
\maketitle

\begin{abstract}
Chatterjee's rank correlation is a directed measure of association designed to detect whether one variable can be predicted as a function of another. While the original coefficient is naturally defined for real-valued data, circular data poses additional difficulty. Applying the usual construction requires cutting each circle at an arbitrary point and treating it as a line. Different choices of cut points can lead to different finite-sample values, even though the underlying circular relationship is unchanged. This paper proposes a circular version of Chatterjee's coefficient that removes this arbitrary choice. The population construction averages over response cuts in circular rank space, and the finite-sample construction averages over sample cut gaps and reduces to a simple statistic based only on cyclic ranks. The resulting coefficient is intrinsic to the circular ordering of the data, remains directed, and retains the key interpretation of Chatterjee's original coefficient. Under non-atomic circular marginals, it is zero exactly under independence and one exactly when the circular response is a measurable function of the circular predictor. We prove consistency and derive its distribution-free null behavior under independence. Simulations show that the proposed coefficient is especially useful for detecting multi-winding circular relationships, such as cases where the response goes around the circle twice or four times as the predictor goes around once, where standard circular correlations can be nearly blind.
\end{abstract}
\section{Introduction}

Circular observations arise whenever measurements are directions, phases, times of day, orientations, or other quantities defined modulo a full turn.  In such problems there is no distinguished origin and no natural place to cut the circle.  Cutting the circle at zero and treating angles as ordinary real numbers is often convenient, but it creates an artificial boundary by placing some nearby directions at opposite ends of the interval.  This is especially problematic for rank methods whose value depends on the adjacency structure of the ordered observations.  The aim of this paper is to construct a Chatterjee-type coefficient for circular variables that uses only the cyclic order of the data and does not depend on an arbitrary cut point.

Chatterjee's coefficient of correlation \cite{chatterjee2021} is a rank-based directed measure of association.  For real-valued data without ties, one sorts the observations by $X$, records the ranks $r_i$ of the concomitant $Y$ values, and computes
\[
  \xi_n(X,Y)=1-\frac{3}{n^2-1}\sum_{i=1}^{n-1}|r_{i+1}-r_i|.
\]
Its population limit $\xi(X,Y)$ lies in $[0,1]$, equals zero if and only if $X$ and $Y$ are independent, and equals one if and only if $Y=f(X)$ almost surely for some measurable function $f$.  The coefficient is intentionally directed, in the sense that it measures the extent to which the response is predictable as a function of the predictor.  A symmetric version can be obtained by taking the maximum of the two directions.  These two extreme cases are central to the interpretation of the coefficient. The value zero characterizes independence, while the value one characterizes measurable functional dependence.  We refer to these as the zero-one characterizations of the coefficient.  Together with the simple null theory, they are the main reasons for the coefficient's appeal \cite{chatterjee2021,chatterjee2023survey}.  The same population target had appeared earlier in regression-dependence form in work of Dette, Siburg and Stoimenov \cite{dettesiburgstoimenov2013}, but Chatterjee's statistic gave a particularly simple rank estimator and a direct asymptotic theory.

The literature following Chatterjee's paper has developed several aspects of this idea.  Azadkia and Chatterjee \cite{azadkiachatterjee2021} introduced a related coefficient of conditional dependence and used it for nonparametric variable selection.  Deb, Ghosal and Sen \cite{debghosalsen2020} proposed a broad graph-and-kernel framework for measuring association on general topological spaces, while Han and Huang \cite{hanhuang2022} studied adaptation of Azadkia-Chatterjee type methods to manifold-supported data.  The power behavior of Chatterjee's statistic has also been investigated.  Shi, Drton and Han \cite{shidrtonhan2022} showed that the independence test based on the original coefficient can be rate-suboptimal against standard smooth local alternatives, and Lin and Han \cite{linhan2023} proposed a boosted version using more than one nearby rank.  These developments show that Chatterjee's coefficient is best viewed not merely as a test of independence, but as a directed measure of functional predictability with a distinctive local rank-increment structure.

Circular and directional statistics have their own long literature on association.  Early work includes Johnson and Wehrly's measures and models for angular association \cite{johnsonwehrly1977}, Jupp and Mardia's general correlation coefficient for directional data \cite{juppmardia1980}, and Fisher and Lee's circular-circular correlation coefficient based on pairwise angular differences \cite{fisherlee1983}.  The centered-sine coefficient of Jammalamadaka and Sarma \cite{jammalamadakasarma1988}, discussed in the monographs of Jammalamadaka and SenGupta \cite{jammalamadakasengupta2001} and Mardia and Jupp \cite{mardiajupp2000}, is another standard measure.  Related work has studied tests of independence and association for bivariate circular data, including Rothman's tests on the torus \cite{rothman1971}, Puri and Rao's tests based on circular association \cite{purirao1977}, and the weighted-degenerate $U$-statistic approach of Shieh, Johnson and Frees \cite{shiehjohnsonfrees1994}.  More recent contributions include Zhan, Ma, Liu and Shimizu's work on circular correlation for data on the torus \cite{zhanma2019}, Chakraborty and Wong's study of Jammalamadaka-Sarma and Fisher-Lee correlations for bivariate von Mises models \cite{chakrabortywong2023}, and the recent graph-based tests for randomness in circular data proposed by Gehlot and Laha \cite{gehlotlaha2025}.  These works address important aspects of circular association, circular independence, or randomness, but they do not provide a directed Chatterjee-type coefficient whose zero-one characterization corresponds to independence and measurable functional dependence.

A formal route from Chatterjee's coefficient to circular variables is available through the standard-Borel extension described in Chatterjee's survey \cite{chatterjee2023survey}.  Let $\mathcal X$ and $\mathcal Y$ be standard Borel spaces.  Choose Borel isomorphisms
\[
  \phi:\mathcal X\to \phi(\mathcal X)\subset\mathbb R,
  \qquad
  \psi:\mathcal Y\to \psi(\mathcal Y)\subset\mathbb R,
\]
where the ranges are Borel subsets of the real line.  Given data $(X_1,Y_1),\ldots,(X_n,Y_n)$ with values in $\mathcal X\times\mathcal Y$, define
\[
  \xi_n^{\phi,\psi}(X,Y)
  :=\xi_n\{\phi(X),\psi(Y)\},
\]
that is, apply the ordinary real-line statistic to the encoded observations $(\phi(X_i),\psi(Y_i))$.  Since a Borel isomorphism preserves the measurable structure, independence of $X$ and $Y$ is equivalent to independence of $\phi(X)$ and $\psi(Y)$, and the assertion that $Y$ is a measurable function of $X$ is equivalent to the assertion that $\psi(Y)$ is a measurable function of $\phi(X)$.  Thus the usual consistency and zero-one interpretation transfer to this encoded version.

This proposal applies formally to circular variables, because the circle is a compact Polish space and hence a standard Borel space.  The most natural way to implement it on $S^1$ is to choose a cut point and use it to identify the circle with an interval.  Writing the circle as $\mathbb T=\mathbb R/\mathbb Z$, a cut point $a\in\mathbb T$ gives
\[
  \phi_a(x)=[x-a]_1,
\]
where $[\cdot]_1$ denotes reduction modulo one into $[0,1)$.  Similarly, a response cut point $b$ gives $\psi_b(y)=[y-b]_1$.  For each pair $(a,b)$, the statistic $\xi_n^{a,b}$ obtained by applying Chatterjee's ordinary coefficient to $(\phi_a(X_i),\psi_b(Y_i))$ is therefore a valid instance of the standard-Borel construction.

The difficulty is that this construction is an existence result rather than a canonical circular coefficient.  First, the value depends on the chosen isomorphisms.  Even within the geometrically natural family $\{\phi_a,\psi_b\}$, changing $a$ or $b$ may change the finite-sample statistic, although those cut points are not features of the data-generating problem.  Second, a linearization turns a cyclic order into a linear order and creates an artificial boundary by placing some angles that are adjacent on the circle at opposite ends of the interval.  Since Chatterjee's statistic is based on adjacent rank increments, this boundary can affect the increments that the statistic sees.  Third, the standard-Borel theorem allows many Borel isomorphisms that are far less natural than cut-point linearizations; such maps can ignore rotations, reversals, arc distance, and cyclic order.  Thus the standard-Borel construction guarantees that some real-line encoding can be used, but it does not specify a statistic determined only by the circular sample and the circular geometry.

This paper constructs such a statistic.  At the population level, we average Chatterjee's ordinary coefficient over response cuts in circular rank space; at the finite-sample level, the corresponding average is over predictor and response sample cut gaps.  The averaging is not merely a conceptual device, since in finite samples it collapses to a closed-form cyclic-rank statistic.  If $X$ determines a cyclic order and $Y$ determines cyclic ranks, only the cyclic rank increments of $Y$ along adjacent $X$-edges are needed.  The resulting coefficient is invariant under rotations and reversals of the circle, and more generally under orientation-preserving or orientation-reversing circular reparameterizations at the level of cyclic order.  It keeps Chatterjee's directed interpretation--nearby predictor ranks should have nearby response ranks when the response is predictable from the predictor--but replaces the arbitrary Borel linearization by an intrinsic cyclic-rank formula that is computable in $O(n\log n)$ time.

Section~2 defines the population coefficient $\xi^\circ(X\to Y)$ for circular variables and gives three equivalent forms, namely a conditional circular-dispersion representation, a cut-average representation, and a Fourier representation.  The Fourier form is then used to prove the zero-one characterizations. For non-atomic circular marginals, the coefficient is zero exactly under independence and one exactly under measurable functional dependence.  Section~3 derives the finite-sample cyclic-rank statistic and proves that it is the exact average of Chatterjee's ordinary finite-sample coefficient over all predictor and response sample cut gaps.  Section~4 proves strong consistency.  Section~5 works out the exact continuous-null mean and variance and establishes the asymptotic null law using a combinatorial central limit theorem for random cycles.  Section~6 gives explicit population examples for circular additive-noise models, and Section~7 summarizes the algorithm and the directed and symmetric versions.  Section~8 presents simulations, including comparisons with standard circular correlations, cut-point sensitivity of the Borel construction, and null calibration of the proposed test.

The construction is different from general metric-space or kernel extensions of Chatterjee-type ideas, such as those of Deb, Ghosal and Sen \cite{debghosalsen2020}.  Those frameworks cover far more spaces.  The point here is not maximal generality, but the special cyclic-order structure of $S^1$, which yields a closed-form rank statistic and an exact cyclic-permutation null theory.

\subsection{Notation}

Denote $\mathbb{T}=\mathbb{R}/\mathbb Z$ for the unit circle, with addition modulo one. The same formulas apply to angles in $[0,2\pi)$ after division by $2\pi$. For $u,v\in\mathbb{T}$, define the counterclockwise arc length
\[
  [v-u]_1 \in [0,1)
\]
to be the unique representative of $v-u$ modulo one. Let
\[
  h(t)=t(1-t),\qquad 0\leq t\leq 1.
\]
The circular discrepancy between two circular ranks $u,v$ is
\[
  H(u,v):=h([v-u]_1)=[v-u]_1(1-[v-u]_1).
\]
This discrepancy is symmetric, since $h(t)=h(1-t)$, although $[v-u]_1$ is oriented.

Let $X,Y$ be $\mathbb{T}$-valued random variables with non-atomic marginal distributions $\mu_X,\mu_Y$. Choose arbitrary origins $o_X,o_Y\in\mathbb{T}$ and define the circular distributional ranks
\[
  S=F_X(X):=\mu_X([o_X,X)),\qquad
  U=F_Y(Y):=\mu_Y([o_Y,Y)),
\]
where $[o,z)$ denotes the counterclockwise arc from $o$ to $z$. Since the marginals are non-atomic, $S$ and $U$ are uniform on $\mathbb{T}$. Changing the origins adds constants to $S$ and $U$ modulo one, so all differences and all coefficients below are unaffected.

For $m\in\mathbb Z$, define the conditional Fourier coefficient
\[
  g_m(s):=\mathbb{E}(e^{2\pi i m U}\mid S=s),\qquad s\in\mathbb{T}.
\]
We use a regular conditional distribution of $U$ given $S=s$; it exists because $\mathbb{T}$ is a standard Borel space.

\section{The population coefficient}

\begin{definition}[Circular Chatterjee coefficient]
Let $K(s,\cdot)$ be a regular conditional distribution of $U$ given $S=s$.  Given $S=s$, let $U_1$ and $U_2$ be conditionally independent with common law $K(s,\cdot)$.  Define
\begin{equation}\label{eq:pop-disp}
  \xi^\circ(X\to Y)
  :=1-6\,\mathbb{E}\,H(U_1,U_2).
\end{equation}
Equivalently,
\[
  \xi^\circ(X\to Y)=1-6\,\mathbb{E}\Big([U_2-U_1]_1(1-[U_2-U_1]_1)\Big).
\]
\end{definition}

The normalization is chosen so that the coefficient is zero under independence. If $U_1,U_2$ are independent uniform circular ranks, then $[U_2-U_1]_1$ is uniform on $[0,1]$, and $\mathbb{E} h([U_2-U_1]_1)=1/6$.

We first record two measure-theoretic facts that will be used in the proof of the zero-one characterizations.  They are the circular analogues of the determining-class and conditional-law steps in Chatterjee's proof of the corresponding zero and one characterizations for the ordinary coefficient.

\begin{lemma}\label{lem:rank-isomorphism}
Let $Z$ be a $\mathbb{T}$-valued random variable with non-atomic law $\mu$.  Fix an origin $o$ and define
\[
  R_\mu(z)=\mu([o,z)),\qquad z\in\mathbb{T}.
\]
Then $R_\mu(Z)$ is uniform on $\mathbb{T}$.  Moreover, there is a Borel map $Q_\mu:\mathbb{T}\to\mathbb{T}$ such that
\[
  Q_\mu(R_\mu(Z))=Z\qquad\text{a.s.}
\]
Consequently, for non-atomic circular marginals,
\begin{align*}
  X\perp Y
  &\quad\Longleftrightarrow\quad R_{\mu_X}(X)\perp R_{\mu_Y}(Y),\\
  Y=f(X)\text{ a.s. for some Borel }f
  &\quad\Longleftrightarrow\quad
  R_{\mu_Y}(Y)=\phi(R_{\mu_X}(X))\text{ a.s. for some Borel }\phi.
\end{align*}
\end{lemma}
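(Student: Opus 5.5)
The plan is to reduce everything to the real line by cutting the circle at the origin $o$. Identify $\mathbb{T}$ with $[0,1)$ via $\theta(z)=[z-o]_1$, and let $\nu$ be the law of $W=\theta(Z)$, a non-atomic Borel probability measure on $[0,1)$ with distribution function $G(w)=\nu([0,w))$. Then $R_\mu(z)=G(\theta(z))$. Since $G$ is continuous, nondecreasing, with $G(0)=0$ and $G(1^-)=1$, the classical probability integral transform gives that $G(W)$ is uniform on $[0,1]$. The atom at $1$ has probability zero, so $R_\mu(Z)$ is uniform on $\mathbb{T}$.

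For the inverse, take the left-continuous quantile function
\[
G^{-1}(u)=\inf\{w\in[0,1):G(w)\ge u\}.
\]
This map is Borel, being monotone. Set $Q_\mu(u)=\theta^{-1}(G^{-1}(u))$ for $u\in[0,1)$, and assign any fixed value at points where the definition is problematic. The key point is that $G^{-1}(G(w))=w$ fails only when $w$ lies in the interior or the right end of an interval on which $G$ is flat. Such flat intervals have the form $(a,b]$ or $[a,b]$ with $\nu$-measure zero, since $G$ does not increase across them. There are at most countably many maximal flat intervals, so the exceptional set is $\nu$-null, and $Q_\mu(R_\mu(Z))=Z$ almost surely. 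I expect this step to be the main technical obstacle. I would handle it by writing the exceptional set explicitly as
\[
\{w:\exists\,w'<w,\ G(w')=G(w)\}=\bigcup_{q\in\mathbb{Q}}\{w>q:G(w)=G(q)\},
\]
and noting that each set in the union is a $\nu$-null interval.

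The two equivalences then follow formally. Write $S=R_{\mu_X}(X)$ and $U=R_{\mu_Y}(Y)$. Since $S$ and $U$ are Borel functions of $X$ and $Y$, independence of $X$ and $Y$ implies independence of $S$ and $U$. Conversely, $X=Q_{\mu_X}(S)$ and $Y=Q_{\mu_Y}(U)$ almost surely, and Borel functions of independent variables are independent; modifying variables on null sets does not affect independence. For functional dependence, if $Y=f(X)$ a.s., then
\[
U=R_{\mu_Y}\bigl(f(Q_{\mu_X}(S))\bigr)\quad\text{a.s.},
\]
so one takes $\phi=R_{\mu_Y}\circ f\circ Q_{\mu_X}$. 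Conversely, if $U=\phi(S)$, then
\[
Y=Q_{\mu_Y}(U)=Q_{\mu_Y}\bigl(\phi(R_{\mu_X}(X))\bigr)\quad\text{a.s.},
\]
which is a Borel function of $X$. Borel measurability of the composite maps holds because $R_\mu$ is a monotone function of $\theta$ composed with a piecewise-continuous identification, and $Q_\mu$ was shown to be Borel above.
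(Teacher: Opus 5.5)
Your proposal is correct and follows essentially the same route as the paper: cut the circle at $o$, apply the probability integral transform, invert with the left-continuous quantile function, and show the exceptional set lies in countably many $\mu$-null flat intervals. The only difference is bookkeeping: you index the flat intervals by rationals, whereas the paper uses the maximal open constancy intervals plus their endpoints. The two equivalences then follow by composing with $R$ and $Q$ exactly as you wrote.
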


\begin{proof}
After cutting the circle at $o$, identify it with $[0,1)$.  Let
\[
  F(t)=\mu([0,t]),\qquad 0\leq t<1,
\]
with the convention $F(1)=1$.  Since $\mu$ has no atoms, $F$ is continuous.  The probability integral transform gives $F(Z)\sim \mathrm{Unif}[0,1]$; with $F(t)=\mu([0,t])=\mu([0,t))$ at $\mu$-a.e. $t$, this is the same as the circular rank $R_\mu(Z)$.

Define the generalized inverse
\[
  Q_\mu(u)=\inf\{t\in[0,1]:F(t)\geq u\},
\]
and map the resulting point of $[0,1)$ back to $\mathbb{T}$.  We claim that $Q_\mu(F(Z))=Z$ a.s.  Indeed, for a continuous nondecreasing $F$, the set of $t$ for which $Q_\mu(F(t))\neq t$ is contained in the union of the open intervals on which $F$ is constant, together with the endpoints of those intervals.  The open intervals are disjoint, hence countable, and each has $\mu$-measure zero; the endpoints are countable and also have $\mu$-measure zero because $\mu$ is non-atomic.  Therefore the exceptional set has $\mu$-measure zero, proving the inverse identity.

The two consequences follow immediately.  If $S=R_{\mu_X}(X)$ and $U=R_{\mu_Y}(Y)$ are independent, then $X=Q_{\mu_X}(S)$ and $Y=Q_{\mu_Y}(U)$ a.s., so $X$ and $Y$ are independent.  The converse is trivial because $S$ and $U$ are measurable functions of $X$ and $Y$.  Similarly, if $Y=f(X)$ a.s., then
\[
  U=R_{\mu_Y}\{f(Q_{\mu_X}(S))\}\qquad\text{a.s.},
\]
so $U$ is a Borel function of $S$.  Conversely, if $U=\phi(S)$ a.s., then
\[
  Y=Q_{\mu_Y}\{\phi(R_{\mu_X}(X))\}\qquad\text{a.s.},
\]
so $Y$ is a Borel function of $X$.
\end{proof}

\begin{lemma}\label{lem:conditional-law-characterizations}
Let $K(s,\cdot)$ be a probability kernel from $\mathbb{T}$ to $\mathbb{T}$, and write
\[
  \widehat K_s(m)=\int_\mathbb{T} e^{2\pi i m u}\,K(s,du),\qquad m\in\mathbb Z.
\]
Then the following statements hold.
\begin{enumerate}[label=(\alph*),leftmargin=2em]
\item If $\widehat K_s(m)=0$ for every nonzero $m\in\mathbb Z$, then $K(s,\cdot)$ is Haar measure on $\mathbb{T}$.
\item If
\[
  \int_\mathbb{T}\int_\mathbb{T} H(u,v)\,K(s,du)K(s,dv)=0,
\]
then $K(s,\cdot)$ is a point mass.  Conversely, every point mass makes this integral zero.
\item If $K(s,\cdot)$ is a point mass for a.e. $s$, then there is a Borel map $\phi:\mathbb{T}\to\mathbb{T}$ such that $K(s,\cdot)=\delta_{\phi(s)}$ for a.e. $s$.
\end{enumerate}
\end{lemma}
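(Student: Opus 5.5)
We treat the three parts separately, since each rests on a different standard fact.

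\emph{Part (a).} This is the uniqueness theorem for Fourier--Stieltjes coefficients of finite measures on $\mathbb{T}$. If $\widehat K_s(m)=0$ for all $m\neq0$, then $K(s,\cdot)$ and Lebesgue (Haar) measure $\lambda$ have identical Fourier coefficients; note $\widehat K_s(0)=1=\widehat\lambda(0)$. Trigonometric polynomials are uniformly dense in $C(\mathbb{T})$ by Fej\'er's theorem or Stone--Weierstrass. Hence $\int f\,dK(s,\cdot)=\int f\,d\lambda$ for all continuous $f$, and the Riesz representation theorem gives $K(s,\cdot)=\lambda$.

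\emph{Part (b).} The key observation is that $H(u,v)=h([v-u]_1)\ge0$, with equality iff $[v-u]_1\in\{0\}$, i.e.\ iff $u=v$ in $\mathbb{T}$. The value $[v-u]_1=1$ never occurs since the representative lies in $[0,1)$. So if the double integral vanishes, $H(u,v)=0$ for $K(s,\cdot)\otimes K(s,\cdot)$-a.e.\ $(u,v)$. This means the diagonal $\Delta$ has full product measure. By Fubini,
\[
1=(K\otimes K)(\Delta)=\int K(s,\{u\})\,K(s,du)=\sum_{a}K(s,\{a\})^2,
\]
where the sum runs over the (countably many) atoms. Since $\sum_a K(s,\{a\})\le1$ and $\sum_a p_a^2\le\max_a p_a\sum_a p_a$, we get $\max_a K(s,\{a\})=1$, so $K(s,\cdot)$ is a point mass. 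The converse is immediate because $H(a,a)=h(0)=0$.

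\emph{Part (c).} This is the step I expect to need the most care, because the point $\phi(s)$ has to be chosen measurably in $s$. I would define $\phi$ explicitly through the kernel, avoiding a selection theorem. Identify $\mathbb{T}$ with $[0,1)$ and set
\[
\phi(s)=\inf\{t\in\mathbb{Q}\cap[0,1]: K(s,[0,t])\ge 1/2\},
\]
with $\phi(s)$ reduced modulo one. Each map $s\mapsto K(s,[0,t])$ is Borel by the kernel property, and the infimum is over countably many $t$. Then $\{\phi(s)<c\}=\bigcup_{t\in\mathbb{Q},\,t<c}\{K(s,[0,t])\ge1/2\}$ is Borel, so $\phi$ is Borel.

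On the full-measure set where $K(s,\cdot)=\delta_{a(s)}$, we have $K(s,[0,t])\ge1/2$ iff $a(s)\le t$. The infimum over rational $t\ge a(s)$ therefore equals $a(s)$, giving $K(s,\cdot)=\delta_{\phi(s)}$ for a.e.\ $s$. Equivalently, one can take $\phi(s)$ to be the conditional median, or use $g_1(s)=\widehat K_s(1)=e^{2\pi i a(s)}$ and set $\phi(s)=\arg g_1(s)/(2\pi)$ with a Borel branch of the argument. That version is also clearly measurable and may be the cleanest to write.
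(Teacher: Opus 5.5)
Your proof is correct. Part (a) matches the paper's argument: trigonometric polynomials are dense in $C(\mathbb{T})$, and the Riesz representation theorem finishes. In (b) and (c) you use different, equally valid local arguments.

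For (b), the paper argues that if $K(s,\cdot)$ were not a point mass, there would be a Borel $A$ with $0<K(s,A)<1$. Then $K\otimes K(A\times A^c)>0$, contradicting concentration on the diagonal. This tacitly uses the fact that a $\{0,1\}$-valued Borel probability on $\mathbb{T}$ is a Dirac mass. That fact is true by second countability, but the paper does not prove it. Your Fubini computation $(K\otimes K)(\Delta)=\sum_a K(s,\{a\})^2$, followed by $\sum_a p_a^2\le\max_a p_a$, is self-contained and avoids needing that fact.

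For (c), the paper takes $\gamma(s)=\widehat K_s(1)$ and applies a Borel branch of the argument. This fits the Fourier theme of the paper. However, it defines $\phi$ only on the point-mass set, with arbitrary values elsewhere. That is harmless, since the point-mass set coincides with the Borel set $\{|\gamma|=1\}$, but the paper leaves this implicit. Your rational-quantile selector $\phi(s)=\inf\{t\in\mathbb{Q}\cap[0,1]:K(s,[0,t])\ge 1/2\}$ is a single global formula. Its Borel measurability is checked directly through the countable union, so no exceptional set needs to be singled out. It also mirrors the generalized-inverse construction $Q_\mu$ in Lemma~\ref{lem:rank-isomorphism}. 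The only price is fixing a cut of the circle, which costs nothing here because $\phi$ need not be intrinsic.
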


\begin{proof}
For (a), Haar measure has Fourier coefficients equal to zero at every nonzero integer and equal to one at zero.  Thus $K(s,\cdot)$ and Haar measure agree on every trigonometric polynomial.  Trigonometric polynomials are uniformly dense in $C(\mathbb{T})$ by Stone-Weierstrass, so the two measures agree on every continuous function.  By the Riesz representation theorem, the two probability measures are equal.

For (b), note that $H$ is nonnegative and $H(u,v)=0$ if and only if $u=v$ in $\mathbb{T}$.  Hence a zero integral implies
\[
  K(s,\cdot)\otimes K(s,\cdot)\{(u,v):u\neq v\}=0.
\]
If $K(s,\cdot)$ were not a point mass, there would be a Borel set $A$ with $0<K(s,A)<1$.  Then
\[
  K(s,\cdot)\otimes K(s,\cdot)(A\times A^c)=K(s,A)\{1-K(s,A)\}>0,
\]
contradicting concentration on the diagonal.  Therefore $K(s,\cdot)$ is a point mass.  The converse is immediate.

For (c), define
\[
  \gamma(s)=\int_\mathbb{T} e^{2\pi i u}\,K(s,du).
\]
The map $s\mapsto\gamma(s)$ is measurable by the definition of a probability kernel.  On the full-measure set where $K(s,\cdot)$ is a point mass, $|\gamma(s)|=1$ and $\gamma(s)$ is exactly the point $e^{2\pi i u}$ corresponding to that atom.  Let $\arg_\mathbb{T}$ be any Borel inverse of $u\mapsto e^{2\pi i u}$ from the unit circle in $\mathbb C$ to $\mathbb{T}$, and define $\phi(s)=\arg_\mathbb{T}\gamma(s)$ on this set, with an arbitrary value elsewhere.  Then $\phi$ is Borel and $K(s,\cdot)=\delta_{\phi(s)}$ for a.e. $s$.
\end{proof}

The function $h(t)=t(1-t)$ has the cosine expansion
\begin{equation}\label{eq:h-fourier}
  h(t)=\frac16-\frac{1}{\pi^2}\sum_{m=1}^{\infty}\frac{\cos(2\pi m t)}{m^2},\qquad 0\leq t\leq 1.
\end{equation}
The series is absolutely and uniformly convergent.

\begin{theorem}[Fourier representation and zero-one characterization]\label{thm:fourier-endpoints}
For non-atomic circular marginals,
\begin{equation}\label{eq:fourier-pop}
  \xi^\circ(X\to Y)
  =\frac{6}{\pi^2}\sum_{m=1}^{\infty}\frac{\mathbb{E}|g_m(S)|^2}{m^2}.
\end{equation}
Consequently
\[
  0\leq \xi^\circ(X\to Y)\leq 1.
\]
Moreover,
\begin{align*}
  \xi^\circ(X\to Y)=0
  &\quad\Longleftrightarrow\quad X\text{ and }Y\text{ are independent},\\
  \xi^\circ(X\to Y)=1
  &\quad\Longleftrightarrow\quad Y=f(X)\text{ a.s. for some measurable }f:\mathbb{T}\to\mathbb{T}.
\end{align*}
\end{theorem}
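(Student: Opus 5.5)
The plan is to substitute the cosine expansion \eqref{eq:h-fourier} into the definition \eqref{eq:pop-disp} and integrate term by term. Since $\cos(2\pi m t)$ has period one, we have $\cos(2\pi m[U_2-U_1]_1)=\cos(2\pi m(U_2-U_1))=\operatorname{Re} e^{2\pi i m(U_2-U_1)}$. The series converges uniformly and $\sum m^{-2}<\infty$, so dominated convergence justifies exchanging expectation and summation. This gives
\[
  \mathbb{E}H(U_1,U_2)=\frac16-\frac1{\pi^2}\sum_{m\ge1}\frac{1}{m^2}\,\operatorname{Re}\mathbb{E}\,e^{2\pi i m(U_2-U_1)}.
\]
We then condition on $S$. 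Given $S=s$, the variables $U_1,U_2$ are independent with law $K(s,\cdot)$, so
\[
  \mathbb{E}\bigl(e^{2\pi i mU_2}e^{-2\pi i mU_1}\mid S=s\bigr)=g_m(s)\overline{g_m(s)}=|g_m(s)|^2,
\]
which is already real. Taking the outer expectation over $S$ and plugging into $1-6\,\mathbb{E}H$ cancels the constant $1$ and yields \eqref{eq:fourier-pop}.

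For the bounds, the lower bound $\xi^\circ\ge0$ is immediate because every term of \eqref{eq:fourier-pop} is nonnegative. For the upper bound, $|g_m|\le1$ gives $\xi^\circ\le \frac{6}{\pi^2}\sum m^{-2}=1$. Alternatively, $H\ge0$ in \eqref{eq:pop-disp} gives $\xi^\circ\le1$ directly.

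For the zero endpoint, suppose $\xi^\circ=0$. Then $\mathbb{E}|g_m(S)|^2=0$ for every $m\ge1$, and hence also for $m\le -1$ since $g_{-m}=\overline{g_m}$. So for a.e. $s$, all nonzero Fourier coefficients of $K(s,\cdot)$ vanish; we intersect countably many null sets to get a single full-measure set. Lemma~\ref{lem:conditional-law-characterizations}(a) then shows $K(s,\cdot)$ is Haar measure for a.e. $s$, so the conditional law of $U$ given $S$ does not depend on $S$, and $U\perp S$. Lemma~\ref{lem:rank-isomorphism} converts this into $X\perp Y$. Conversely, independence of $X$ and $Y$ gives $U\perp S$, so $K(s,\cdot)=\mathrm{Unif}$ a.e. and $g_m\equiv0$ for $m\ne0$, which makes $\xi^\circ=0$.

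For the one endpoint, $\xi^\circ=1$ is equivalent to $\mathbb{E}H(U_1,U_2)=0$, that is, to $\int\!\!\int H\,dK(s,\cdot)\,dK(s,\cdot)=0$ for a.e. $s$, because the integrand is nonnegative. By Lemma~\ref{lem:conditional-law-characterizations}(b) this holds exactly when $K(s,\cdot)$ is a point mass for a.e. $s$. Part (c) of the same lemma gives a Borel $\phi$ with $K(s,\cdot)=\delta_{\phi(s)}$, hence $U=\phi(S)$ a.s., and Lemma~\ref{lem:rank-isomorphism} transfers this to $Y=f(X)$. Conversely, if $Y=f(X)$, then $U=\phi(S)$, so $K(s,\cdot)$ is a point mass and the integrals vanish.

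The main obstacle is a measure-theoretic one. We must check that $U=\phi(S)$ a.s. forces $K(s,\cdot)=\delta_{\phi(s)}$ for a.e. $s$; this follows from the a.e. uniqueness of regular conditional distributions. We must also check that a.e. conditional uniformity yields independence, by computing $P(S\in A,U\in B)=\int_A K(s,B)\,ds$. Both facts are standard.
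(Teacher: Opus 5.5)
Your proposal is correct and follows essentially the same route as the paper. Like the paper, you substitute the cosine series term by term and use conditional independence to get $|g_m(s)|^2$; you then use Lemma~\ref{lem:conditional-law-characterizations}(a) with Lemma~\ref{lem:rank-isomorphism} for the zero endpoint, and parts (b)--(c) with Lemma~\ref{lem:rank-isomorphism} for the one endpoint. Your alternative upper bound via $|g_m|\le 1$ and $\sum m^{-2}=\pi^2/6$, and your explicit appeal to a.e.\ uniqueness of regular conditional distributions, only spell out details the paper leaves implicit.
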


\begin{proof}
Let $K(s,\cdot)$ be a regular conditional distribution of $U$ given $S=s$.  Thus
\[
  g_m(s)=\int_\mathbb{T} e^{2\pi i m u}\,K(s,du)
\]
for a.e. $s$.  Conditional on $S=s$, let $U_1,U_2$ be independent with common law $K(s,\cdot)$.  Then, for each $m\geq 1$,
\[
\begin{aligned}
  \mathbb{E}\{e^{2\pi i m(U_2-U_1)}\mid S=s\}
  &=\left(\int e^{2\pi i m v}\,K(s,dv)\right)
    \left(\int e^{-2\pi i m u}\,K(s,du)\right)  \\
  &=|g_m(s)|^2.
\end{aligned}
\]
Substituting the Fourier expansion \eqref{eq:h-fourier} into $H(U_1,U_2)=h([U_2-U_1]_1)$ gives, for a.e. $s$,
\[
  \mathbb{E}\{H(U_1,U_2)\mid S=s\}
  =\frac16-\frac1{\pi^2}\sum_{m=1}^\infty\frac{|g_m(s)|^2}{m^2}.
\]
The interchange of the conditional expectation and the series is justified by absolute summability.  Taking expectations in $s$ and using the definition \eqref{eq:pop-disp} gives \eqref{eq:fourier-pop}.

The upper bound $\xi^\circ\leq1$ follows directly from \eqref{eq:pop-disp}, since $H\geq0$.  The lower bound follows from the Fourier formula \eqref{eq:fourier-pop}, since every summand is nonnegative.

We first prove the characterization of the value zero.  If $X$ and $Y$ are independent, then $S$ and $U$ are independent by Lemma \ref{lem:rank-isomorphism}.  Since $U$ is uniform on $\mathbb{T}$,
\[
  g_m(S)=\mathbb{E}(e^{2\pi i mU}\mid S)=\mathbb{E} e^{2\pi i mU}=0,
  \qquad m\geq1,
\]
so \eqref{eq:fourier-pop} gives $\xi^\circ=0$.

Conversely, suppose $\xi^\circ=0$.  Since the terms in \eqref{eq:fourier-pop} are nonnegative,
\[
  \mathbb{E}|g_m(S)|^2=0\qquad\text{for every }m\geq1.
\]
Because the set of positive integers is countable, there is a single Borel set $B\subset\mathbb{T}$ with Lebesgue measure one such that $g_m(s)=0$ for every $m\geq1$ and every $s\in B$.  The negative Fourier coefficients also vanish, because they are complex conjugates of the positive ones.  Hence, by Lemma \ref{lem:conditional-law-characterizations}(a),
\[
  K(s,\cdot)=\lambda(\cdot)\qquad\text{for every }s\in B,
\]
where $\lambda$ is Haar measure on $\mathbb{T}$.  Therefore, for Borel sets $A,C\subset\mathbb{T}$,
\[
\begin{aligned}
  \mathbb{P}(S\in A, U\in C)
  &=\int_A K(s,C)\,\lambda(ds) \\
  &=\int_A \lambda(C)\,\lambda(ds)
   =\lambda(A)\lambda(C).
\end{aligned}
\]
Thus $S$ and $U$ are independent.  Lemma \ref{lem:rank-isomorphism} then gives independence of $X$ and $Y$.

We now prove the characterization of the value one.  If $Y=f(X)$ a.s., then Lemma \ref{lem:rank-isomorphism} gives a Borel $\phi$ such that $U=\phi(S)$ a.s.  Therefore $K(s,\cdot)=\delta_{\phi(s)}$ for a.e. $s$.  Two conditionally independent draws from a point mass are equal, so $H(U_1,U_2)=0$ a.s. and \eqref{eq:pop-disp} gives $\xi^\circ=1$.

Conversely, suppose $\xi^\circ=1$.  By \eqref{eq:pop-disp},
\[
  \mathbb{E} H(U_1,U_2)=0.
\]
Since $H\geq0$, this implies that for a.e. $s$,
\[
  \int_\mathbb{T}\int_\mathbb{T} H(u,v)\,K(s,du)K(s,dv)=0.
\]
By Lemma \ref{lem:conditional-law-characterizations}(b), $K(s,\cdot)$ is a point mass for a.e. $s$.  By Lemma \ref{lem:conditional-law-characterizations}(c), there is a Borel $\phi:\mathbb{T}\to\mathbb{T}$ such that $K(s,\cdot)=\delta_{\phi(s)}$ for a.e. $s$.  Hence
\[
  \mathbb{P}(U=\phi(S))=\int K(s,\{\phi(s)\})\,\lambda(ds)=1.
\]
Applying Lemma \ref{lem:rank-isomorphism} once more, $Y$ is a Borel measurable function of $X$ a.s.
\end{proof}

\begin{remark}
For the ordinary coefficient, Chatterjee proves the zero characterization using the conditional survival functions $G_X(t)=P(Y\geq t\mid X)$ and $G(t)=P(Y\geq t)$.  Equality $G_X(t)=G(t)$ is first obtained for a full-$\mu$ set of thresholds and is then extended to all thresholds by the atom, right-continuity, and support structure of the law of $Y$; half-lines then form a determining class.  He proves the one characterization by showing that the conditional survival function is $0$ or $1$ for $\mu$-almost every threshold.  Equivalently, the conditional law is trapped in an interval of zero effective $\mu$-width, which the support argument forces to be a single point, so the conditional law is Dirac.  The proof above follows the same logic in the circular setting, but replaces half-line indicators by the Fourier characters of $\mathbb{T}$.  The nonzero Fourier characters form a countable determining class for probability measures on the circle, and the condition $\xi^\circ=1$ is exactly the statement that $H(U_1,U_2)=0$ a.s. for two conditionally independent draws from $K(S,\cdot)$.  Since $H(u,v)=0$ if and only if $u=v$ on the circle, this forces $K(S,\cdot)$ to be a Dirac law a.s.
\end{remark}

\subsection{Cut-average interpretation}

For a response cut point $b\in\mathbb{T}$, let
\[
  U^{(b)}=[U-b]_1\in[0,1).
\]
If $\xi_b(X,Y)$ denotes Chatterjee's population coefficient applied to the real-valued response $U^{(b)}$ and to any fixed linear cut of the predictor, then
\begin{equation}\label{eq:pop-cut-average}
  \xi^\circ(X\to Y)=\int_0^1 \xi_b(X,Y)\,db.
\end{equation}
This is a response-cut average in circular rank space.  The predictor cut is immaterial at the population level, because any fixed linear cut of the predictor rank generates the same $\sigma$-field as $S$, whereas the response cut changes the ordinary linear rank distance.

Indeed, for two fixed circular ranks $u,v$ with $q=[v-u]_1$,
\begin{equation}\label{eq:cut-distance-identity}
  \int_0^1 \left|[u-b]_1-[v-b]_1\right|\,db=2q(1-q)=2H(u,v).
\end{equation}
The ordinary Chatterjee population coefficient for a continuous linear response can be written as $1-3\mathbb{E}|U^{(b)}_1-U^{(b)}_2|$, where $U^{(b)}_1,U^{(b)}_2$ are conditionally independent given the predictor. Averaging over $b$ and applying \eqref{eq:cut-distance-identity} yields \eqref{eq:pop-cut-average}.

\section{The finite sample cyclic-rank statistic}

Assume first that there are no ties among the observed $X_i$'s or $Y_i$'s. Let
\[
  (X_1,Y_1),\ldots,(X_n,Y_n)
\]
be iid copies of $(X,Y)$. Order the indices cyclically around the predictor circle as
\[
  i_1,i_2,\ldots,i_n,
\]
with $i_{n+1}=i_1$. Let $\rho_j\in\{0,1,\ldots,n-1\}$ be the cyclic rank of $Y_j$ around the response circle, computed from any arbitrary origin. Changing the origin adds the same constant modulo $n$ to every $\rho_j$ and hence does not change the statistic. Define the clockwise cyclic rank increment along the $k$th predictor edge as
\begin{equation}\label{eq:dk}
  d_k=(\rho_{i_{k+1}}-\rho_{i_k})\bmod n\in\{1,2,\ldots,n-1\}.
\end{equation}

\begin{definition}[Finite sample circular coefficient]\label{def:raw-stat}
For $n\geq 2$, define
\begin{equation}\label{eq:raw-stat}
  \xi_n^\circ(X\to Y)
  :=1-\frac{6}{n^2(n+1)}\sum_{k=1}^n d_k(n-d_k).
\end{equation}
Equivalently,
\[
  \xi_n^\circ(X\to Y)=1-\frac{6}{n+1}\sum_{k=1}^n h(d_k/n).
\]
\end{definition}

The raw statistic is a literal equal-weight average over sample cut gaps of the ordinary coefficient. Its finite-sample maximum is less than one, as in Chatterjee's original statistic. It is useful to introduce the finite-sample corrected version
\begin{equation}\label{eq:corrected-stat}
  \xi_{n,*}^\circ(X\to Y)
  :=\frac{\xi_n^\circ(X\to Y)}{a_n},
  \qquad
  a_n:=1-\frac{6(n-1)}{n(n+1)}=\frac{(n-2)(n-3)}{n(n+1)},
\end{equation}
for $n\geq 4$. This rescaling is asymptotically negligible, since $a_n\to 1$, but it makes the maximum equal to one when the two cyclic rank orders agree or are exactly reversed.

\begin{proposition}[Exact sample cut average]\label{prop:sample-cut-average}
Let $\xi_n^{a,b}$ be Chatterjee's ordinary finite-sample coefficient obtained by cutting the predictor circle at the $a$-th sample gap and the response circle at the $b$-th sample gap, then applying the usual linear ranks. Then
\begin{equation}\label{eq:sample-cut-average}
  \xi_n^\circ(X\to Y)=\frac1{n^2}\sum_{a=1}^n\sum_{b=1}^n \xi_n^{a,b}.
\end{equation}
\end{proposition}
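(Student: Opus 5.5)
The plan is a direct counting argument. Linearity of the ordinary coefficient in its sum of absolute rank increments lets the double average over cuts pass inside the sum. Recall that for linear ranks $r_1,\dots,r_n\in\{1,\dots,n\}$ listed in predictor order,
\[
\xi_n=1-\frac{3}{n^2-1}\sum_{i=1}^{n-1}|r_{i+1}-r_i|.
\]
So it suffices to compute the average over $(a,b)$ of $\sum_{i=1}^{n-1}|r_{i+1}-r_i|$, where both the ordering and the ranks depend on the cuts, and compare it with $\sum_k d_k(n-d_k)$.

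\emph{Predictor cut.} Cutting the predictor circle at the $a$-th sample gap turns the cyclic order $i_1,\dots,i_n,i_{n+1}=i_1$ into a linear order. Its $n-1$ consecutive pairs are exactly the cyclic edges $(i_k,i_{k+1})$ except the single edge crossing gap $a$. Hence, for any fixed response ranks, the linear sum equals $\sum_{k\neq k(a)} c_k$, where $c_k$ is the absolute linear rank difference across the $k$-th cyclic edge. As $a$ ranges over the $n$ gaps, each edge is omitted exactly once. Therefore
\[
\frac1n\sum_a \sum_{k\neq k(a)}c_k=\frac{n-1}{n}\sum_{k=1}^n c_k .
\]
The response cut does not interact with this, so the two averages can be taken in either order.

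\emph{Response cut.} Cutting the response circle at the $b$-th sample gap gives linear ranks that agree with $(\rho_j-\beta_b)\bmod n$ up to an additive constant, for a suitable shift $\beta_b$, and $\beta_b$ runs over all residues mod $n$ as $b$ varies. For an edge with clockwise increment $d_k$, the linear difference is $d_k$ if the cut does not fall in the cyclic arc of length $d_k$ from $\rho_{i_k}$ to $\rho_{i_{k+1}}$. It is $n-d_k$ if the cut does fall in that arc, which happens for exactly $d_k$ of the $n$ cuts. This is the discrete analogue of \eqref{eq:cut-distance-identity}. Hence
\[
\frac1n\sum_b c_k=\frac{(n-d_k)\,d_k+d_k(n-d_k)}{n}=\frac{2d_k(n-d_k)}{n}.
\]

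\emph{Assembly.} Combining the two averages gives
\[
\frac1{n^2}\sum_{a,b}\sum_i|r_{i+1}-r_i|=\frac{2(n-1)}{n^2}\sum_k d_k(n-d_k).
\]
Multiplying by $3/(n^2-1)=3/\{(n-1)(n+1)\}$ and subtracting from $1$ yields exactly \eqref{eq:raw-stat}, which proves \eqref{eq:sample-cut-average}.

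The only step requiring care is the response-cut count. One must check carefully, with no ties, that ``the cut lies in the clockwise arc'' happens for exactly $d_k$ of the $n$ gaps, including the boundary conventions for which gap sits between which ranks, and that the linear difference is then $n-d_k$ rather than something off by one. I would verify this on the explicit relabeling $r_j=((\rho_j-\beta)\bmod n)+1$.
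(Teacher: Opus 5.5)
Your proposal is correct and follows the paper's proof essentially step for step. Each cyclic predictor edge is dropped by exactly one of the $n$ predictor cuts, giving the factor $(n-1)/n$. Across the $n$ response cuts, the linear rank difference is $d_k$ for $n-d_k$ cuts and $n-d_k$ for $d_k$ cuts, giving the average $2d_k(n-d_k)/n$. The normalization $3/(n^2-1)$ then yields \eqref{eq:raw-stat}. The boundary check you flag goes through with your relabeling: writing $x=(\rho_{i_k}-\beta)\bmod n$, the difference is $d_k$ exactly when $x\le n-1-d_k$ and $n-d_k$ otherwise.
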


\begin{proof}
Fix a cyclic predictor edge $e_k=(i_k,i_{k+1})$ and write $d=d_k$. For a fixed response sample gap, the ordinary absolute linear rank difference across this edge is either $d$ or $n-d$. As the response cut runs through the $n$ sample gaps, the value is $d$ for $n-d$ sample gaps and $n-d$ for $d$ sample gaps. Therefore its average over response sample gaps is
\[
  \frac{(n-d)d+d(n-d)}{n}=\frac{2d(n-d)}{n}.
\]
For a fixed predictor sample gap, the ordinary Chatterjee sum contains all cyclic predictor edges except the one crossing the cut. Averaging over the $n$ predictor sample gaps includes each cyclic edge exactly $n-1$ times. Thus the double average of the ordinary rank-increment sum is
\[
  \frac{n-1}{n}\sum_{k=1}^n \frac{2d_k(n-d_k)}{n}.
\]
Multiplying by the ordinary no-tie Chatterjee normalizing factor $3/(n^2-1)$ gives
\[
  \frac{3}{n^2-1}\cdot \frac{n-1}{n}\cdot \frac{2}{n}
  \sum_{k=1}^n d_k(n-d_k)
  =\frac{6}{n^2(n+1)}\sum_{k=1}^n d_k(n-d_k),
\]
which proves \eqref{eq:sample-cut-average}.
\end{proof}

\begin{proposition}[Invariance]\label{prop:invariance}
The statistic $\xi_n^\circ$ is invariant under rotations, reflections, and arbitrary orientation-preserving or orientation-reversing homeomorphisms applied separately to the predictor and response circles. Also,
\[
  -\frac{6}{n^2(n+1)}\max_{\pi}\sum_{k=1}^n d_k(n-d_k)+1
  \leq \xi_n^\circ\leq a_n,
\]
where $a_n$ is given in \eqref{eq:corrected-stat}. The upper bound is attained exactly when the cyclic rank orders agree or are reversed.
\end{proposition}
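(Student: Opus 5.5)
Invariance comes first, because $\xi_n^\circ$ depends on the data only through the multiset $\{h(d_k/n)\}_{k=1}^n$, and $d_k$ is built from the cyclic order of the $X_i$ and the cyclic ranks $\rho_j$ of the $Y_i$. An orientation-preserving homeomorphism of the predictor circle (rotations included) does not change the cyclic order $i_1,\dots,i_n$; at most it changes the starting index, which only relabels the edges $e_k$ cyclically, so the sum over $k$ is unchanged. An orientation-reversing homeomorphism of the predictor circle reverses the cyclic order. Each edge $(i_k,i_{k+1})$ is then traversed as $(i_{k+1},i_k)$, so $d_k$ becomes $n-d_k$, and $h(d/n)=h(1-d/n)$ leaves every term unchanged. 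On the response side, an orientation-preserving homeomorphism keeps the cyclic ranks up to an additive constant modulo $n$, a change the definition already allows. An orientation-reversing one replaces $\rho_j$ by $-\rho_j \bmod n$ up to a constant, sending $d_k\mapsto n-d_k$, and symmetry of $h$ again finishes. Reflections are the special case of orientation-reversing isometries.

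For the bounds, the lower bound is a tautology once we note that $\xi_n^\circ$ is determined by the cyclic permutation $\pi$ relating the two cyclic orders. I would state that explicitly rather than prove anything. The real content is the upper bound $\xi_n^\circ\le a_n$, which is equivalent to
\[
\sum_{k=1}^n d_k(n-d_k)\ \ge\ n(n-1).
\]
Each $d_k\in\{1,\dots,n-1\}$ satisfies $d_k(n-d_k)\ge n-1$, with equality iff $d_k\in\{1,n-1\}$. Summing over the $n$ edges gives the bound, and equality holds iff every increment is $\pm1 \bmod n$.

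For the equality case I need to show that every $d_k\in\{1,n-1\}$ exactly when the response cyclic order agrees with or reverses the predictor cyclic order. The "if" direction is immediate: agreement gives all $d_k=1$, reversal gives all $d_k=n-1$.

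The converse is the step I expect to be the main obstacle. Consider the closed walk $\rho_{i_1},\rho_{i_2},\dots,\rho_{i_n},\rho_{i_1}$ on $\mathbb Z_n$ with steps $\pm1$. It visits each residue exactly once before returning.
\begin{itemize}
\item Suppose a step $+1$ is immediately followed by $-1$. Then the walk returns to a residue already visited, $\rho_{i_{k+2}}=\rho_{i_k}$, which contradicts injectivity once $n\ge3$ (the distinct indices $i_k,i_{k+2}$ must then have equal ranks).
\item The same argument rules out a step $-1$ followed by $+1$.
\item Applying this at every position, including the wrap-around from edge $n$ to edge $1$, forces all steps to be equal. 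This means the response order follows the predictor order in the same or the reversed direction.
\end{itemize}
The case $n=2$ can be checked directly, or excluded since $a_n$ is only meaningful for $n\ge4$.
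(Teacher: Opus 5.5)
Your proposal is correct and follows the paper's proof essentially step for step. It uses the same case analysis for invariance (cyclic relabeling, or $d_k\mapsto n-d_k$ together with $h(t)=h(1-t)$), the same termwise bound $d(n-d)\ge n-1$ for the upper bound, and the same tautological reading of the lower bound. The one place you go further is the equality case: your no-backtracking argument on the closed $\pm1$ walk (valid for $n\ge3$) justifies why all $d_k$ must be equal rather than a mixture of $1$'s and $(n-1)$'s, which the paper simply asserts.
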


\begin{proof}
A circle homeomorphism either preserves or reverses cyclic order. If it preserves cyclic order, the increments $d_k$ are unchanged up to a cyclic relabeling. If it reverses the response orientation, each $d_k$ is replaced by $n-d_k$; the product $d_k(n-d_k)$ is unchanged. If it reverses the predictor orientation, the same unoriented cyclic edges are traversed in the opposite direction, again replacing increments by $n-d_k$. Hence the statistic is invariant.

Since $d(n-d)\geq n-1$ for $d\in\{1,\ldots,n-1\}$, the sum in \eqref{eq:raw-stat} is at least $n(n-1)$, giving the upper bound $a_n$. Equality requires every $d_k$ to be $1$ or every $d_k$ to be $n-1$, which is exactly agreement or reversal of the two cyclic rank orders. The lower bound is immediate from maximizing the nonnegative sum over cyclic permutations.
\end{proof}

\section{Consistency}\label{sec:consistency}

This section gives the technical proof of consistency.  It is the circular analogue of the consistency theorem for Chatterjee's original coefficient, but the cyclic setting lets us separate the proof into two elementary ingredients, namely uniform convergence of empirical arc probabilities and a successor-based law of large numbers for bounded Fourier functions.

Let
\[
  S_i=F_X(X_i),\qquad U_i=F_Y(Y_i),
\]
where the circular distributional ranks are defined in Section 2.  Then $S_i$ and $U_i$ are marginally uniform on $\mathbb{T}$.  Because the marginals are non-atomic, ties occur with probability zero.  Let $N(i)$ be the clockwise successor of $S_i$ among $S_1,\ldots,S_n$.  Let
\[
  A_i=[U_{N(i)}-U_i]_1.
\]
Thus $A_i$ is the population circular $Y$-rank increment along the edge from $S_i$ to its empirical clockwise successor.

\begin{lemma}[Uniform empirical control of circular arcs]\label{lem:vc-arcs}
Let
\[
  \lambda_n(I)=\frac1n\sum_{j=1}^n\mathbf{1}\{U_j\in I\}
\]
for circular arcs $I\subset\mathbb{T}$, and let $\lambda$ be Haar measure on $\mathbb{T}$.  Then
\[
  \sup_{I\in \text{ circular arc}} |\lambda_n(I)-\lambda(I)|\longrightarrow 0
\]
almost surely.
\end{lemma}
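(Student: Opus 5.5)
The plan is to reduce the statement to the classical Glivenko--Cantelli theorem for the uniform distribution on $[0,1)$. We identify $\mathbb{T}$ with $[0,1)$ by cutting at $0$. Since each $U_j=F_Y(Y_j)$ is uniform on $\mathbb{T}$ by Lemma~\ref{lem:rank-isomorphism}, the $U_j$ become iid $\mathrm{Unif}[0,1)$ variables. Let $G_n(t)=\lambda_n([0,t))$ and $G(t)=t$. Glivenko--Cantelli then gives
\[
  D_n:=\sup_{t\in[0,1]}|G_n(t)-t|\to 0\qquad\text{a.s.}
\]
We may also use the closed-interval version $\sup_t|\lambda_n([0,t])-t|$. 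It is controlled by the same argument, since its left limits equal the values of $G_n$, or equivalently because the DKW-style argument handles both half-open and closed endpoints.

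Next we decompose an arbitrary circular arc $I$ into linear intervals. An arc either does not contain the cut point $0$ in its interior, in which case it is a linear interval $\langle s,t\rangle\subset[0,1)$ with some choice of open or closed endpoints. Otherwise it wraps around, and its complement in $\mathbb{T}$ is a linear interval $J$, so $\lambda_n(I)=1-\lambda_n(J)$ and $\lambda(I)=1-\lambda(J)$. The full circle and the empty arc are trivial.

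For a linear interval with endpoints $s\le t$, $\lambda_n(\langle s,t\rangle)$ is a difference of two values of the empirical distribution function, one at $t$ and one at $s$, each taken with or without the endpoint mass. Each of these four endpoint conventions differs from the Lebesgue value by at most $\max\bigl(D_n,\sup_t|\lambda_n([0,t])-t|\bigr)$. Hence
\[
  |\lambda_n(I)-\lambda(I)|\le 2\max\Bigl(D_n,\ \sup_t|\lambda_n([0,t])-t|\Bigr)
\]
uniformly over all arcs, wrapped or not, because complementation preserves absolute differences. The right-hand side tends to $0$ almost surely, which proves the lemma.

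The only real care needed is the bookkeeping of endpoint conventions and the wrap-around case, and the complement trick disposes of the latter. Alternatively, one could observe that circular arcs form a VC class, since they are intersections or unions of at most two half-lines after cutting, and invoke a uniform law of large numbers. The elementary Glivenko--Cantelli route above suffices and avoids that machinery.
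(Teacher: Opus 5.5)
Your proposal is correct and is essentially the paper's argument: identify $\mathbb{T}$ with $[0,1)$, write each arc as a linear interval or the complement of one, bound the circular supremum by twice the Kolmogorov--Smirnov supremum, and apply Glivenko--Cantelli to the iid uniform $U_j$. One small bookkeeping fix: split cases by $0\notin I$ versus $0\in I$, not by whether $0$ lies in the interior of $I$, because an arc ending at and including $0$ is $[s,1)\cup\{0\}$, which is not a linear interval; your complement trick still handles that case.
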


\begin{proof}
A circular arc is either an interval in $[0,1)$ or the complement of such an interval, after a fixed identification of $\mathbb{T}$ with $[0,1)$.  Hence the asserted supremum is bounded by twice the usual Kolmogorov-Smirnov supremum for intervals on the line.  Since $U_1,U_2,\ldots$ are iid uniform random variables, the Glivenko-Cantelli theorem gives the result. 
\end{proof}

\begin{lemma}[Sample cyclic ranks approximate circular ranks]\label{lem:rank-approx}
Let $d_i$ be the sample cyclic rank increment in \eqref{eq:dk} along the edge from $i$ to $N(i)$.  Then, almost surely,
\begin{equation}\label{eq:rank-approx}
  \max_{1\leq i\leq n}\left|\frac{d_i}{n}-A_i\right|\longrightarrow 0.
\end{equation}
\end{lemma}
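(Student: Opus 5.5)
The plan is to express the sample cyclic rank increment $d_i$ as an empirical arc count and then apply Lemma~\ref{lem:vc-arcs} uniformly over all edges. The sample cyclic ranks $\rho_j$ are computed from the $Y_j$, but $F_Y$ is nondecreasing along the circle from the origin $o_Y$. Since there are no ties almost surely and $F_Y$ is continuous, the cyclic order of $Y_1,\ldots,Y_n$ coincides with the cyclic order of $U_1,\ldots,U_n$ almost surely. We must rule out $U_j=U_k$ for $j\neq k$ with $Y_j\neq Y_k$, which can happen only if both lie in a flat stretch of $F_Y$. Such stretches form a $\mu_Y$-null set, so this event has probability zero. Because the statistic does not depend on the rank origin, we may compute $\rho$ from the $U_j$.

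For the edge from $i$ to $N(i)$, the clockwise rank increment counts sample points in a half-open arc:
\[
  d_i=(\rho_{N(i)}-\rho_i)\bmod n=\#\{j: U_j\in (U_i,U_{N(i)}]\},
\]
where $(U_i,U_{N(i)}]$ is the counterclockwise arc from $U_i$ to $U_{N(i)}$, whose Haar length is exactly $[U_{N(i)}-U_i]_1=A_i$. This is the key combinatorial identity; it should be checked with the paper's convention that $d_k\in\{1,\ldots,n-1\}$ and the orientation implicit in \eqref{eq:dk}. If the convention reverses orientation, we use the complementary arc, which replaces both $d_i/n$ and $A_i$ by one minus themselves and so leaves the difference unchanged in absolute value. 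Then
\[
  \left|\frac{d_i}{n}-A_i\right|=\bigl|\lambda_n\bigl((U_i,U_{N(i)}]\bigr)-\lambda\bigl((U_i,U_{N(i)}]\bigr)\bigr|\le \sup_{I}|\lambda_n(I)-\lambda(I)|,
\]
with the supremum over all circular arcs. The half-open versus closed endpoint issue changes the count by at most $1/n$, which is absorbed by enlarging the class to include all arc types; the Glivenko--Cantelli argument of Lemma~\ref{lem:vc-arcs} covers these as well.

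The bound is uniform in $i$, and the arcs are random and depend on the sample, but this is harmless because the supremum runs over all arcs simultaneously. Taking the maximum over $i$ and applying Lemma~\ref{lem:vc-arcs} gives \eqref{eq:rank-approx} almost surely. The main obstacle is bookkeeping rather than analysis: identifying sample cyclic ranks of $Y$ with those of $U$ almost surely, and matching the orientation and endpoint conventions so that the displayed counting identity holds exactly up to $O(1/n)$.
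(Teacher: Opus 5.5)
Your proposal is correct and is essentially the paper's proof: $d_i/n=\lambda_n(I_i)$ and $A_i=\lambda(I_i)$ for the half-open arc $I_i$ from $U_i$ to $U_{N(i)}$, and the maximum over $i$ is bounded by the uniform arc deviation of Lemma~\ref{lem:vc-arcs}. Your extra step identifying the cyclic order of the $Y_j$ with that of the $U_j$ almost surely fills in a detail the paper leaves implicit. Your orientation hedge is unnecessary, because $\rho$ and $U$ both increase in the same direction of $[0,1)$; also, reversing only one of them would give $d_i/n\approx 1-A_i$, so the difference would not stay unchanged.
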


\begin{proof}
Let $I_i$ be the clockwise arc from $U_i$ to $U_{N(i)}$, including the endpoint $U_{N(i)}$ and excluding $U_i$.  Since there are no ties, $d_i$ is exactly the number of sample $U$-values in this arc.  Therefore
\[
  \frac{d_i}{n}=\lambda_n(I_i),\qquad A_i=\lambda(I_i).
\]
The arcs $I_i$ are random, but Lemma \ref{lem:vc-arcs} controls all arcs simultaneously.  Thus
\[
  \max_i\left|\frac{d_i}{n}-A_i\right|
  \leq \sup_{I\text{ arc}} |\lambda_n(I)-\lambda(I)|\to0
\]
almost surely.
\end{proof}

\begin{lemma}[Maximum spacing on the predictor circle]\label{lem:max-spacing}
Let
\[
  \Delta_n=\max_{1\leq i\leq n} [S_{N(i)}-S_i]_1
\]
be the largest spacing between adjacent empirical predictor ranks.  Then $\Delta_n\to0$ almost surely.
\end{lemma}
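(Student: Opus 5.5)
The plan is to reduce the statement to the uniform control of empirical arc probabilities already established in Lemma~\ref{lem:vc-arcs}, applied now to the predictor ranks $S_1,\ldots,S_n$ instead of the response ranks. The $S_i$ are iid uniform on $\mathbb{T}$, so the same Glivenko--Cantelli argument gives
\[
  D_n:=\sup_{I\text{ arc}}\Bigl|\frac1n\sum_{j=1}^n \mathbf 1\{S_j\in I\}-\lambda(I)\Bigr|\longrightarrow 0
\]
almost surely.

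The key step is the observation that every empirical spacing is an empty arc. For each $i$, consider the open clockwise arc $J_i=(S_i,S_{N(i)})$. By the definition of the clockwise successor $N(i)$, and since ties have probability zero, $J_i$ contains no sample point $S_j$. So the empirical measure of $J_i$ is $0$, while $\lambda(J_i)=[S_{N(i)}-S_i]_1$. It follows that
\[
  [S_{N(i)}-S_i]_1=\Bigl|\lambda(J_i)-\frac1n\sum_j\mathbf 1\{S_j\in J_i\}\Bigr|\le D_n
\]
for every $i$ simultaneously. Hence $\Delta_n\le D_n\to0$ almost surely, which proves the lemma.

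There is only one point needing care, and it is minor. Lemma~\ref{lem:vc-arcs} is stated for arcs with unspecified endpoint conventions. I would note that the Kolmogorov--Smirnov bound covers open, closed and half-open intervals alike, because suprema over these classes coincide up to the limit of boundary effects; equivalently, one can use half-open arcs and include the endpoint $S_{N(i)}$, which adds at most $1/n$ to the bound. I would also handle the degenerate case $n=1$, where the successor of a point is itself, by restricting to $n\ge2$ or conventionally taking the spacing to be $1$; this does not affect the limit.

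An alternative route, should one prefer to avoid Glivenko--Cantelli, is to partition $\mathbb{T}$ into $M$ fixed arcs of length $1/M$. Almost surely, eventually every such arc contains a sample point, by Borel--Cantelli, since $M(1-1/M)^n$ is summable in $n$. Once that holds, every spacing is at most $2/M$, and letting $M\to\infty$ along the integers gives the result. I expect no real obstacle here.
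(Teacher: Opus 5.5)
Your proof is correct, but your main argument takes a different route from the paper. The paper uses exactly the covering argument you list as your alternative. It partitions $\mathbb{T}$ into $M$ half-open cells of length $1/M<\varepsilon/2$ and notes that an empty arc of length at least $\varepsilon$ must contain an empty cell. A union bound then gives $\mathbb{P}\{\Delta_n\geq\varepsilon\}\leq M(1-1/M)^n$, and Borel--Cantelli along a countable sequence $\varepsilon\downarrow0$ finishes the proof. Your primary route instead reuses Lemma~\ref{lem:vc-arcs}, applied to the iid uniform $S_j$. The key observation is that every spacing is an empty open arc, which gives the deterministic inequality $\Delta_n\leq D_n$ (or $\Delta_n\leq D_n+1/n$ with half-open arcs), so no new probabilistic estimate is needed. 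Your treatment of the endpoint convention is fine: for a continuous distribution function, the Kolmogorov--Smirnov bound also controls open and wrapping arcs, up to a factor of two. Your route is shorter, given the lemma already in hand. The paper's route is self-contained and has a much sharper tail: letting $M$ grow like $n/(C\log n)$ with $C>2$ in the same bound makes $M(1-1/M)^n$ summable and gives $\Delta_n=O(\log n/n)$ almost surely, the correct order for uniform spacings. By contrast, $\Delta_n\leq D_n$ only transfers the Glivenko--Cantelli rate $O(\sqrt{\log\log n/n})$. For the qualitative statement $\Delta_n\to0$, which is all Lemma~\ref{lem:nn-fourier} uses, either argument suffices.
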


\begin{proof}
Fix $\varepsilon>0$ and partition $\mathbb{T}$ into $M$ half-open arcs of equal length $1/M$, with $1/M<\varepsilon/2$.  Any circular arc of length at least $\varepsilon$ contains one of these cells.  Hence, if every cell of this partition contains at least one sample point, then no empty circular arc can have length at least $\varepsilon$, and $\Delta_n<\varepsilon$.  For a fixed cell, the probability of being empty is $(1-1/M)^n$.  Therefore
\[
  \mathbb{P}\{\Delta_n\geq \varepsilon\}
  \leq M(1-1/M)^n,
\]
which is summable in $n$.  By Borel--Cantelli, $\Delta_n<\varepsilon$ eventually almost surely.  Applying this to a countable sequence $\varepsilon\downarrow0$ proves $\Delta_n\to0$ almost surely.
\end{proof}

\begin{lemma}[Successor Fourier average]\label{lem:nn-fourier}
For each fixed integer $m\geq 1$,
\begin{equation}\label{eq:nn-fourier}
  \frac1n\sum_{i=1}^n e^{2\pi i m U_{N(i)}}e^{-2\pi i m U_i}
  \longrightarrow \mathbb{E} |g_m(S)|^2
\end{equation}
almost surely.
\end{lemma}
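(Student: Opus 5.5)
We split the summand through the conditional Fourier function $g_m$ and handle the two resulting pieces separately. Write $e_m(u)=e^{2\pi i m u}$ and decompose
\[
  e_m(U_{N(i)})\overline{e_m(U_i)}
  = g_m(S_{N(i)})\overline{g_m(S_i)}
  + \{e_m(U_{N(i)})-g_m(S_{N(i)})\}\overline{e_m(U_i)}
  + g_m(S_{N(i)})\{\overline{e_m(U_i)}-\overline{g_m(S_i)}\}.
\]
Averaging over $i$ gives three terms. The first is a ``signal'' term that should converge to $\mathbb{E}|g_m(S)|^2$. The other two are ``noise'' terms built from the centered variables $\eta_j=e_m(U_j)-g_m(S_j)$. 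These are conditionally mean zero given $\mathcal S=\sigma(S_1,\dots,S_n)$, and they are conditionally independent across $j$ given $\mathcal S$, because the pairs are iid.

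\textbf{Signal term.} Here $g_m$ is only bounded measurable ($|g_m|\le1$), not continuous, so $|S_{N(i)}-S_i|\le\Delta_n\to0$ (Lemma~\ref{lem:max-spacing}) does not suffice on its own. I would approximate $g_m$ in $L^2(\lambda)$ by a continuous $\tilde g$ with $\|g_m-\tilde g\|_{L^2}<\varepsilon$. Replacing $g_m$ by $\tilde g$ at the point $S_i$ costs $\frac1n\sum_i|g_m-\tilde g|(S_i)$, which tends a.s. to at most $\varepsilon$ by the SLLN. At the point $S_{N(i)}$ the cost is the same, because $N$ is a bijection of $\{1,\dots,n\}$ (each point is the successor of exactly one other point). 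For the continuous $\tilde g$, uniform continuity together with $\Delta_n\to0$ gives $\frac1n\sum_i \tilde g(S_{N(i)})\overline{\tilde g(S_i)}-\frac1n\sum_i|\tilde g(S_i)|^2\to0$, and the SLLN handles the remainder. Letting $\varepsilon\downarrow0$ shows the signal term converges a.s. to $\mathbb{E}|g_m(S)|^2$.

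\textbf{Noise terms.} Consider $T_n=\frac1n\sum_i \eta_{N(i)}\overline{e_m(U_i)}$; the other noise term has the same structure and is treated identically. Write $\overline{e_m(U_i)}=\overline{\eta_i}+\overline{g_m(S_i)}$. The part $\frac1n\sum_i \eta_{N(i)}\overline{g_m(S_i)}$ is, given $\mathcal S$, a sum of independent mean-zero bounded terms, since $N$ is a permutation and the coefficients are $\mathcal S$-measurable. The part $\frac1n\sum_i\eta_{N(i)}\overline{\eta_i}$ is a sum along the edges of a single cycle. Its terms are mean zero given $\mathcal S$, and each is dependent only on its two neighbouring edges. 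In both cases, conditional fourth moments are $O(n^{-2})$ by counting index coincidences in the fourth power; for the cycle sum only $O(n^2)$ index quadruples give nonzero contributions. Hence $\mathbb{E}|T_n|^4=O(n^{-2})$, which is summable in $n$, and Markov's inequality with Borel--Cantelli yields $T_n\to0$ a.s.

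\textbf{Main obstacle.} The hard step is the signal term. There the measurability-only regularity of $g_m$ must be reconciled with the random successor map, and the key device is the bijectivity of $N$, which transfers the $L^1$ approximation error from $S_{N(i)}$ back to $S_i$. The fourth-moment bookkeeping for the noise terms is routine by comparison. One detail to handle carefully is that the conditional independence used above is for the pairs $(S_j,U_j)$ given all the $S$'s.
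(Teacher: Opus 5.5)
Your proof is correct, and its overall structure matches the paper's. Your signal term $\frac1n\sum_i g_m(S_{N(i)})\overline{g_m(S_i)}$ is exactly the paper's conditional mean $B_n=\mathbb{E}(T_n\mid S_1,\ldots,S_n)$, where $T_n=\frac1n\sum_i W_{N(i)}\overline{W_i}$ with $W_j=e_m(U_j)$, and your two noise terms add up to $T_n-B_n$. Your handling of the signal term is also the paper's: continuous approximation, Lemma~\ref{lem:max-spacing}, and bijectivity of $N$ to move the approximation error from $S_{N(i)}$ back to $S_i$.

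The genuine difference is the fluctuation step. The paper handles $T_n-B_n$ in one stroke with McDiarmid's bounded-differences inequality. Conditionally on the $S_i$'s, the $W_j$'s are independent, and each touches only the two cycle edges at $j$. This gives difference constants $4/n$ and the summable tail $4\exp(-n\varepsilon^2/16)$. You instead split $T_n-B_n$ into linear pieces $\frac1n\sum_j c_j\eta_j$ with $\mathcal S$-measurable coefficients, plus the degenerate cycle sum $\frac1n\sum_i\eta_{N(i)}\overline{\eta_i}$. You then bound conditional fourth moments by counting index coincidences. Your count is right. In the cycle sum, an edge used only once needs both neighbouring edges present. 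So with four slots, once $n\ge5$, only ``one edge four times'' or ``two edges twice each'' survive. That gives $O(n^2)$ nonvanishing quadruples, hence fourth moments $O(n^{-2})$ uniformly in $\mathcal S$.

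The paper's route is shorter, needs no combinatorics, and gives exponential rather than polynomial tails. Yours uses only Markov's inequality and makes the linear-plus-degenerate structure of the statistic explicit, which is a natural starting point for rates.

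One small repair is needed in your signal step. When you replace $g_m(S_{N(i)})$, the other factor is already $\overline{\tilde g(S_i)}$. So the cost is bounded by $\frac1n\sum_i|g_m-\tilde g|(S_{N(i)})$ only if $|\tilde g|\le 1$. You can choose $\tilde g$ that way: compose with the radial retraction onto the closed unit disc, which does not increase $|g_m-\tilde g|$ because $|g_m|\le1$. Alternatively, use Cauchy--Schwarz in $L^2$ as the paper does.
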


\begin{proof}
Set
\[
  W_i=e^{2\pi i mU_i},\qquad g(s)=\mathbb{E}(W_i\mid S_i=s).
\]
Condition on $S_1,\ldots,S_n$.  Then $W_1,\ldots,W_n$ are independent, bounded by one, and
\[
  \mathbb{E}(W_i\mid S_1,\ldots,S_n)=g(S_i).
\]
The conditional expectation of the left-hand side of \eqref{eq:nn-fourier} is
\[
  B_n=\frac1n\sum_{i=1}^n g(S_{N(i)})\overline{g(S_i)}.
\]
We first prove
\begin{equation}\label{eq:Bn-limit}
  B_n\longrightarrow \int_\mathbb{T} |g(s)|^2\,ds
\end{equation}
almost surely.  If $g$ is continuous, then by Lemma \ref{lem:max-spacing},
\[
  \max_i |g(S_{N(i)})-g(S_i)|\to0
\]
almost surely.  Therefore
\[
  \left|B_n-\frac1n\sum_{i=1}^n |g(S_i)|^2\right|
  \leq \max_i |g(S_{N(i)})-g(S_i)|\,\frac1n\sum_{i=1}^n |g(S_i)|\to0.
\]
The empirical average of $|g(S_i)|^2$ converges almost surely to $\int |g|^2$ by the strong law.

For general $g\in L^2(\mathbb{T})$, fix a continuous $g_0$ and write $r=g-g_0$.  Since $N$ is a permutation of the sample indices,
\[
  \frac1n\sum_i |r(S_{N(i)})|^2=\frac1n\sum_i |r(S_i)|^2\to \|r\|_2^2
\]
almost surely.  Cauchy-Schwarz gives
\[
\begin{aligned}
 |B_n(g)-B_n(g_0)|
 &\leq \left(\frac1n\sum_i |r(S_{N(i)})|^2\right)^{1/2}
       \left(\frac1n\sum_i |g(S_i)|^2\right)^{1/2} \\
 &\quad +\left(\frac1n\sum_i |g_0(S_{N(i)})|^2\right)^{1/2}
       \left(\frac1n\sum_i |r(S_i)|^2\right)^{1/2}.
\end{aligned}
\]
Taking limits and then choosing $g_0$ with $\|g-g_0\|_2$ arbitrarily small proves \eqref{eq:Bn-limit}.

It remains to remove the conditional expectation.  Let
\[
  T_n=\frac1n\sum_{i=1}^n W_{N(i)}\overline{W_i}.
\]
Conditional on $S_1,\ldots,S_n$, changing one variable $W_j$ can affect only the two terms corresponding to the edges entering and leaving $j$ in the empirical cycle.  Each affected term changes by at most $2/n$, so the bounded-difference constant for each of the real-valued functions $\operatorname{Re}T_n$ and $\operatorname{Im}T_n$ is at most $4/n$.  Applying McDiarmid's inequality separately to real and imaginary parts, followed by a union bound, gives, conditionally on the $S_i$'s,
\[
  \mathbb{P}\{|T_n-B_n|>\varepsilon\mid S_1,\ldots,S_n\}
  \leq 4\exp\{-n\varepsilon^2/16\}.
\]
The right side is summable in $n$, and Borel-Cantelli gives $T_n-B_n\to0$ almost surely.  Combining this with \eqref{eq:Bn-limit} proves the lemma, because $\int |g|^2=\mathbb{E}|g_m(S)|^2$.
\end{proof}

\begin{theorem}[Strong consistency]\label{thm:consistency}
If $X$ and $Y$ have non-atomic circular marginal distributions, then
\[
  \xi_n^\circ(X\to Y)\longrightarrow \xi^\circ(X\to Y)
\]
almost surely.  The corrected statistic $\xi_{n,*}^\circ$ has the same almost sure limit.
\end{theorem}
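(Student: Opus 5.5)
We plan to write the statistic in terms of the successor structure and then pass to the limit using the Fourier expansion \eqref{eq:h-fourier}. Since the sample cyclic order of $X_1,\dots,X_n$ coincides with that of $S_1,\dots,S_n$ (a.s., the map $F_X$ being order-preserving off a null set), the edges $(i_k,i_{k+1})$ are exactly the edges $(i,N(i))$. By Definition \ref{def:raw-stat},
\[
  \xi_n^\circ=1-\frac{6n}{n+1}\cdot\frac1n\sum_{i=1}^n h(d_i/n).
\]
Because $n/(n+1)\to1$ and $h$ is bounded, it suffices to show $\frac1n\sum_i h(d_i/n)\to \mathbb{E}H(U_1,U_2)=\tfrac16-\tfrac1{\pi^2}\sum_{m\ge1}\mathbb{E}|g_m(S)|^2/m^2$. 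This expression for $\mathbb{E}H(U_1,U_2)$ was established in the proof of Theorem \ref{thm:fourier-endpoints}.

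\textbf{Step 1 (replace sample increments by population increments).} The function $h$ is Lipschitz on $[0,1]$ with constant $1$. Lemma \ref{lem:rank-approx} therefore gives
\[
  \left|\frac1n\sum_i h(d_i/n)-\frac1n\sum_i h(A_i)\right|\le\max_i|d_i/n-A_i|\to0
\]
almost surely. One point needs care: $d_i/n\in[0,1]$, while $A_i=[U_{N(i)}-U_i]_1\in[0,1)$, and near the wrap-around $h$ is continuous on the circle since $h(0)=h(1)$. The bound is therefore valid in the circular sense, because $|h(x)-h(y)|$ is controlled by the circular distance between $x$ and $y$.

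\textbf{Step 2 (Fourier expansion of the population sum).} By \eqref{eq:h-fourier}, $h(A_i)=\tfrac16-\tfrac1{\pi^2}\sum_m \cos(2\pi m A_i)/m^2$, with $\cos(2\pi mA_i)=\operatorname{Re}\,e^{2\pi i mU_{N(i)}}e^{-2\pi imU_i}$. Hence
\[
  \frac1n\sum_i h(A_i)=\frac16-\frac1{\pi^2}\sum_{m\ge1}\frac{\operatorname{Re}T_n^{(m)}}{m^2},
\]
where $T_n^{(m)}$ is the successor average in Lemma \ref{lem:nn-fourier}. For each fixed $m$, Lemma \ref{lem:nn-fourier} gives $T_n^{(m)}\to\mathbb{E}|g_m(S)|^2$ a.s. Intersecting the countably many full-probability events yields one event on which all $m$ converge simultaneously.

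\textbf{Step 3 (interchange of limit and series).} We expect this to be the main, though mild, obstacle. Since $|T_n^{(m)}|\le1$ uniformly in $n$ and $\sum 1/m^2<\infty$, dominated convergence for series (equivalently, truncation at level $M$ with tail at most $\sum_{m>M}m^{-2}$) passes the limit inside the sum. Combining the three steps gives $\xi_n^\circ\to\xi^\circ$ a.s.

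For the corrected statistic, $a_n\to1$, so $\xi^\circ_{n,*}=\xi_n^\circ/a_n$ has the same almost sure limit.
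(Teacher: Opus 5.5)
Your proposal is correct and follows the paper's proof essentially step for step. It replaces $d_i/n$ by $A_i$ using the Lipschitz property of $h$ and Lemma~\ref{lem:rank-approx}, reduces via the Fourier expansion to the successor averages of Lemma~\ref{lem:nn-fourier}, uses dominated convergence over $m$, and concludes with $n/(n+1)\to1$ and $a_n\to1$. Your wrap-around caveat in Step~1 is harmless but unnecessary. Lemma~\ref{lem:rank-approx} already gives ordinary closeness of $d_i/n$ and $A_i$, both of which lie in $[0,1]$, so the plain Lipschitz bound applies directly.
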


\begin{proof}
The function $h(t)=t(1-t)$ is Lipschitz on $[0,1]$.  By Lemma \ref{lem:rank-approx},
\begin{equation}\label{eq:h-rank-replace}
  \frac1n\sum_{i=1}^n h(d_i/n)-\frac1n\sum_{i=1}^n h(A_i)\longrightarrow0
\end{equation}
almost surely.

The Fourier expansion \eqref{eq:h-fourier} gives, uniformly and absolutely,
\[
  h([v-u]_1)=\frac16-\frac1{\pi^2}\sum_{m=1}^\infty
  \frac{\Re\{e^{2\pi i m v}e^{-2\pi i m u}\}}{m^2}.
\]
Thus
\[
  \frac1n\sum_{i=1}^n h(A_i)
  =\frac16-\frac1{\pi^2}\sum_{m=1}^\infty\frac1{m^2}
  \Re\left\{\frac1n\sum_{i=1}^n e^{2\pi i mU_{N(i)}}e^{-2\pi i mU_i}\right\}.
\]
For each fixed $m$, Lemma \ref{lem:nn-fourier} identifies the limit of the term in braces.  Since the terms are bounded by one and $\sum m^{-2}<\infty$, dominated convergence for series gives
\begin{equation}\label{eq:h-limit}
  \frac1n\sum_{i=1}^n h(A_i)
  \longrightarrow
  \frac16-\frac1{\pi^2}\sum_{m=1}^\infty\frac{\mathbb{E}|g_m(S)|^2}{m^2}
\end{equation}
almost surely.

Finally,
\[
  \xi_n^\circ
  =1-\frac{6}{n^2(n+1)}\sum_i d_i(n-d_i)
  =1-\frac{6n}{n+1}\left\{\frac1n\sum_i h(d_i/n)\right\}.
\]
Combining this identity with \eqref{eq:h-rank-replace} and \eqref{eq:h-limit} gives exactly the Fourier population value \eqref{eq:fourier-pop}.  Since $a_n\to1$, the corrected statistic has the same almost sure limit.
\end{proof}

\section{Null distribution under independence}\label{sec:null}

Assume in this section that $X$ and $Y$ are independent and have continuous marginals.  Then the relative cyclic order of the $Y$ ranks around the cyclic order of the $X$ ranks is a uniformly random cyclic order.  Consequently the finite-sample null distribution of $\xi_n^\circ$ is distribution-free.

Let $\pi=(\pi_1,\ldots,\pi_n)$ be a uniformly random permutation of $\{0,1,\ldots,n-1\}$, viewed cyclically with $\pi_{n+1}=\pi_1$.  Define
\[
  D_k=(\pi_{k+1}-\pi_k)\bmod n,
  \qquad
  Z_k=D_k(n-D_k),
  \qquad
  S_n=\sum_{k=1}^n Z_k.
\]
Then under the continuous independence null,
\begin{equation}\label{eq:null-representation}
  \xi_n^\circ=1-\frac{6}{n^2(n+1)}S_n.
\end{equation}

\subsection{Exact first two moments}

We use the following elementary sums.
\begin{align}
  A_n&:=\sum_{d=1}^{n-1}d(n-d)=\frac{n(n^2-1)}6,\label{eq:An}\\
  B_n&:=\sum_{d=1}^{n-1}d^2(n-d)^2=\frac{n(n^4-1)}{30}.\label{eq:Bn}
\end{align}
Let $w(d)=d(n-d)$.

\begin{lemma}[One-edge and two-edge moments]\label{lem:null-covariances}
For any $k$,
\begin{align}
  \mathbb{E} Z_k&=\frac{n(n+1)}6,\label{eq:Ez}\\
  \operatorname{Var}(Z_k)&=\frac{n(n-3)(n-2)(n+1)}{180}.\label{eq:Varz}
\end{align}
If two distinct cyclic edges are adjacent, then
\begin{equation}\label{eq:cov-adj-new}
  \operatorname{Cov}(Z_j,Z_k)=-\frac{n(n-3)(n+1)}{180}.
\end{equation}
If two distinct cyclic edges are disjoint, then
\begin{equation}\label{eq:cov-dis-new}
  \operatorname{Cov}(Z_j,Z_k)=\frac{n(n+1)}{90}.
\end{equation}
\end{lemma}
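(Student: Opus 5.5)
The plan is to reduce every moment to a count over a few consecutive values of the uniformly random permutation $\pi$. Translation invariance modulo $n$ will make each count depend only on cyclic differences. Throughout, $w(d)=d(n-d)$, and $A_n$, $B_n$ are the sums in \eqref{eq:An}--\eqref{eq:Bn}.

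\emph{One edge.} The pair $(\pi_k,\pi_{k+1})$ is uniform over ordered pairs of distinct elements of $\mathbb Z/n$. For each $d\in\{1,\ldots,n-1\}$ there are exactly $n$ such pairs with difference $d$. Hence $D_k$ is uniform on $\{1,\ldots,n-1\}$, and
\[
\mathbb E Z_k=\frac{A_n}{n-1}=\frac{n(n+1)}6,\qquad \mathbb E Z_k^2=\frac{B_n}{n-1}=\frac{n(n^2+1)(n+1)}{30}.
\]
Subtracting $\{n(n+1)/6\}^2$ and factoring should give \eqref{eq:Varz}. One checks that $(n^2+1)/30-n(n+1)/36=(n-2)(n-3)/180$.

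\emph{Adjacent edges.} Take edges $k$ and $k+1$. The triple $(\pi_k,\pi_{k+1},\pi_{k+2})$ is uniform over ordered triples of distinct elements. Given $D_k=d$, the difference $D_{k+1}=e$ is uniform over $e\in\{1,\ldots,n-1\}$ with $e\neq n-d$, since $\pi_{k+2}\neq\pi_k$ is the only extra constraint. Therefore
\[
\mathbb E Z_kZ_{k+1}=\frac{1}{(n-1)(n-2)}\sum_d w(d)\{A_n-w(n-d)\}=\frac{A_n^2-B_n}{(n-1)(n-2)},
\]
using $w(n-d)=w(d)$. Subtracting $(\mathbb E Z)^2$ should give \eqref{eq:cov-adj-new}.

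\emph{Disjoint edges.} The four values $\pi_j,\pi_{j+1},\pi_k,\pi_{k+1}$ form a uniform ordered quadruple of distinct elements. Condition on $(\pi_j,\pi_{j+1})=(a,a+d)$; then $(\pi_k,\pi_{k+1})$ is uniform over the $(n-2)(n-3)$ ordered distinct pairs that avoid $\{a,a+d\}$. To count pairs $(x,x+e)$ avoiding this set, use inclusion--exclusion. There are $n$ pairs in total, $2$ with $x$ in the set and $2$ with $x+e$ in the set. Both lie in the set exactly when $e=d$ (pair $x=a$) or $e=n-d$ (pair $x=a+d$). This gives the count $n-4+\mathbf 1\{e=d\}+\mathbf 1\{e=n-d\}$. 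Summing against $w(d)w(e)$ yields
\[
\mathbb E Z_jZ_k=\frac{(n-4)A_n^2+2B_n}{(n-1)(n-2)(n-3)}.
\]
Subtracting $(\mathbb E Z)^2$ should give \eqref{eq:cov-dis-new}. For $n=4$ the disjoint case only arises for opposite edges, and the formula should then be checked directly or read as a limit.

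The main obstacle is the final polynomial simplification, in particular cancelling the factors $(n-2)(n-3)$ in the disjoint case. I would carry this out by substituting $A_n^2=n^2(n^2-1)^2/36$ and $B_n=n(n^4-1)/30$ and factoring out $n(n+1)$. As a consistency check, I would verify the identity
\[
\operatorname{Var}(Z_k)+2\operatorname{Cov}_{\rm adj}+(n-3)\operatorname{Cov}_{\rm dis}=\operatorname{Cov}\Big(Z_k,\sum_l Z_l\Big).
\]
I would also check all four formulas numerically for a small case such as $n=5$ by enumeration.
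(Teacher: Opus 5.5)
Your proposal is correct and follows essentially the same route as the paper: a uniform one-edge increment, the single excluded value $n-d$ for adjacent edges, and the count $n-4+\mathbf 1\{e=d\}+\mathbf 1\{e=n-d\}$ for disjoint edges. These give the same expressions $(A_n^2-B_n)/\{(n-1)(n-2)\}$ and $\{(n-4)A_n^2+2B_n\}/\{(n-1)(n-2)(n-3)\}$, which do simplify to the stated covariances; your caveat about $n=4$ is unnecessary, because $(n-2)(n-3)=2\neq 0$ and the same count applies verbatim there.
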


\begin{proof}
For a single edge, $D_k$ is uniform on $\{1,\ldots,n-1\}$, so \eqref{eq:Ez} follows from \eqref{eq:An}.  Also
\[
  \mathbb{E} Z_k^2=\frac{B_n}{n-1}=\frac{n(n+1)(n^2+1)}{30},
\]
and subtracting $\{n(n+1)/6\}^2$ gives \eqref{eq:Varz}.

For adjacent edges, condition on the first increment being $d$.  The second increment may be any element of $\{1,\ldots,n-1\}$ except $n-d$, because that excluded value would return to the first vertex.  Since $w(n-d)=w(d)$,
\[
  \mathbb{E}(Z_1Z_2)
  =\frac1{n-1}\sum_{d=1}^{n-1}w(d)\,\frac{A_n-w(d)}{n-2}
  =\frac{A_n^2-B_n}{(n-1)(n-2)}.
\]
Subtracting $\{n(n+1)/6\}^2$ gives \eqref{eq:cov-adj-new}.

For disjoint edges, fix the first oriented edge to be $(0,r)$; by rotation invariance this loses no generality.  For a proposed second increment $e$, the number of possible starting vertices $c$ such that $(c,c+e)$ uses neither $0$ nor $r$ is
\[
  n-4+\mathbf{1}\{e=r\}+\mathbf{1}\{e=n-r\}.
\]
Indeed, among the $n$ possible starts, the forbidden set is
\[
  \{0,r,-e,r-e\},
\]
whose cardinality is $4-\mathbf{1}\{e=r\}-\mathbf{1}\{e=n-r\}$.  Therefore, conditional on the first increment $r$, the average weight of the second disjoint edge is
\[
  \frac{(n-4)A_n+2w(r)}{(n-2)(n-3)}.
\]
Averaging over $r$ gives
\[
  \mathbb{E}(Z_1Z_3)
  =\frac{(n-4)A_n^2+2B_n}{(n-1)(n-2)(n-3)}.
\]
Subtracting $\{n(n+1)/6\}^2$ gives \eqref{eq:cov-dis-new}.
\end{proof}

\begin{theorem}[Exact null mean and variance]\label{thm:null-moments}
Under the continuous independence null,
\begin{align*}
  \mathbb{E}_0\xi_n^\circ&=0,\\
  \operatorname{Var}_0(\xi_n^\circ)&=\frac{(n-3)(n-2)}{5n^2(n+1)}.
\end{align*}
For the corrected statistic \eqref{eq:corrected-stat}, $n\geq4$,
\begin{align*}
  \mathbb{E}_0\xi_{n,*}^\circ&=0,\\
  \operatorname{Var}_0(\xi_{n,*}^\circ)&=\frac{n+1}{5(n-2)(n-3)}.
\end{align*}
\end{theorem}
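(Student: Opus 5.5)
The plan is to work entirely from the null representation \eqref{eq:null-representation}, $\xi_n^\circ=1-\frac{6}{n^2(n+1)}S_n$ with $S_n=\sum_{k=1}^n Z_k$, and to read off the first two moments of $S_n$ from Lemma~\ref{lem:null-covariances}.

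For the mean, linearity and \eqref{eq:Ez} give
\[
\mathbb{E}S_n=n\cdot\frac{n(n+1)}{6}=\frac{n^2(n+1)}{6},
\]
so $\mathbb{E}_0\xi_n^\circ=1-1=0$.

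For the variance, write
\[
\operatorname{Var}(S_n)=\sum_k\operatorname{Var}(Z_k)+\sum_{j\neq k}\operatorname{Cov}(Z_j,Z_k).
\]
The covariance sum is then split according to the cycle structure of the $n$ edges $(k,k+1)$ on the cyclic order of the $X$'s. Each edge has exactly two adjacent edges, so there are $2n$ ordered adjacent pairs. The remaining $n(n-3)$ ordered pairs are disjoint, which requires $n\geq4$; for small $n$ the formula can be checked directly or interpreted with the degenerate terms vanishing. Substituting \eqref{eq:Varz}, \eqref{eq:cov-adj-new} and \eqref{eq:cov-dis-new} gives
\[
\operatorname{Var}(S_n)=\frac{n(n+1)}{180}\Bigl[n(n-3)(n-2)-2n(n-3)+2n(n-3)\Bigr]=\frac{n^2(n+1)(n-2)(n-3)}{180}.
\]
The adjacent and disjoint contributions cancel exactly. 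Multiplying by $\bigl(6/(n^2(n+1))\bigr)^2=36/(n^4(n+1)^2)$ yields
\[
\operatorname{Var}_0(\xi_n^\circ)=\frac{36(n-2)(n-3)}{180\,n^2(n+1)}=\frac{(n-3)(n-2)}{5n^2(n+1)},
\]
as claimed.

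For the corrected statistic, $\xi_{n,*}^\circ=\xi_n^\circ/a_n$ with the deterministic constant $a_n=(n-2)(n-3)/(n(n+1))$. Hence its null mean is $0$, and its variance is $\operatorname{Var}_0(\xi_n^\circ)/a_n^2$. This simplifies to
\[
\frac{(n-2)(n-3)}{5n^2(n+1)}\cdot\frac{n^2(n+1)^2}{(n-2)^2(n-3)^2}=\frac{n+1}{5(n-2)(n-3)}.
\]

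No step is a real obstacle, since Lemma~\ref{lem:null-covariances} already does the probabilistic work. The one point to check carefully is the count of ordered adjacent versus disjoint pairs on an $n$-cycle, together with the requirement $n\geq4$ for the disjoint case to be nonempty and for $a_n$ to be positive.
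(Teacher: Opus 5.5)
Your proof is correct and follows the paper's own route: the mean comes from linearity and \eqref{eq:Ez}. The variance uses the same split of $\operatorname{Var}(S_n)$ into $n$ variance terms, $2n$ adjacent and $n(n-3)$ disjoint covariance terms from Lemma~\ref{lem:null-covariances}, followed by rescaling with $\{6/[n^2(n+1)]\}^2$ and then $a_n^{-2}$; your arithmetic, including the exact cancellation of the adjacent and disjoint contributions, checks out.
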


\begin{proof}
By \eqref{eq:Ez},
\[
  \mathbb{E} S_n=n\frac{n(n+1)}6=\frac{n^2(n+1)}6.
\]
The representation \eqref{eq:null-representation} therefore gives $\mathbb{E}_0\xi_n^\circ=0$.

There are $n$ variances, $n$ unordered adjacent edge pairs, and $n(n-3)/2$ unordered disjoint edge pairs.  Lemma \ref{lem:null-covariances} gives
\begin{align*}
  \operatorname{Var}(S_n)
  &=n\operatorname{Var}(Z_1)+2n\operatorname{Cov}_{\rm adj}+n(n-3)\operatorname{Cov}_{\rm dis}\\
  &=\frac{n^2(n+1)(n-2)(n-3)}{180}.
\end{align*}
Multiplying by $\{6/[n^2(n+1)]\}^2$ yields the variance of $\xi_n^\circ$.  Division by $a_n^2$, with $a_n=(n-2)(n-3)/\{n(n+1)\}$, gives the corrected variance.
\end{proof}

\subsection{Central limit theorem}

The null CLT follows from a combinatorial CLT for one-cycle permutation sums.  The reduction is as follows.  A cyclic ordering $\pi_1,\ldots,\pi_n$ defines a successor permutation $\sigma$ on $\{0,\ldots,n-1\}$ by
\[
  \sigma(\pi_k)=\pi_{k+1},\qquad k=1,\ldots,n,
\]
where $\pi_{n+1}=\pi_1$.  If $\pi$ is a uniformly random cyclic order, then $\sigma$ is uniformly distributed over the set of all $n$-cycles.  Conversely, every directed $n$-cycle has exactly $n$ cyclic listings, so this representation is exact.

Define the symmetric circulant weight matrix
\[
  W_{ij}= [j-i]_n\{n-[j-i]_n\},\qquad W_{ii}=0,
\]
where $[j-i]_n$ is the representative in $\{0,1,\ldots,n-1\}$.  Let
\[
  \bar W=\frac1{n-1}\sum_{j\ne i}W_{ij}=\frac{n(n+1)}6,
\]
which is independent of $i$, and set
\[
  a_{ij}=\begin{cases}
    W_{ij}-\bar W,& i\ne j,\\
    0,& i=j.
  \end{cases}
\]
Then $a_{ij}=a_{ji}$, $a_{ii}=0$, and every row sum is zero.  Moreover,
\begin{equation}\label{eq:linear-cycle-stat}
  S_n-\mathbb{E} S_n=\sum_{i=0}^{n-1} a_{i,\sigma(i)}.
\end{equation}

We use the following theorem of Goldstein \cite[Theorem 2.6]{goldstein2005}.  It is a Berry-Esseen bound for combinatorial sums over random permutations whose distribution is constant on a fixed cycle type and has no fixed points; the special case needed here is the uniform distribution over $n$-cycles.

\begin{theorem}[Goldstein's combinatorial CLT]\label{thm:goldstein}
Let $\sigma$ be uniformly distributed over the $n$-cycles of $\{1,\ldots,n\}$.  Let $a_{ij}=a_{ji}$, $a_{ii}=0$, and $\sum_j a_{ij}=0$ for every $i$.  Put
\[
  Y_n=\sum_i a_{i,\sigma(i)},\qquad \sigma_n^2=\operatorname{Var}(Y_n),
  \qquad C_n=\max_{i,j}|a_{ij}|.
\]
If $C_n/\sigma_n\to0$, then
\[
  \frac{Y_n}{\sigma_n}\Rightarrow N(0,1).
\]
\end{theorem}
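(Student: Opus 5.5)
Since this is the cited result of Goldstein \cite[Theorem 2.6]{goldstein2005}, the paper can simply invoke it. If I were to prove it directly, I would use Stein's method with an exchangeable pair built by conjugation. Because conjugation preserves cycle type, the uniform law on $n$-cycles is invariant under it. Pick a uniformly random pair $I\neq J$, let $\tau=(I\,J)$, and set $\sigma'=\tau\sigma\tau$. Then $\sigma'$ is again a uniform $n$-cycle and $(\sigma,\sigma')$ is exchangeable, so $(Y_n,Y_n')$ with $Y_n'=\sum_i a_{i,\sigma'(i)}$ is an exchangeable pair.

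First I would record that $Y_n'-Y_n$ involves only the summands indexed by $I$, $J$, $\sigma^{-1}(I)$ and $\sigma^{-1}(J)$, together with their replacements. Hence $|Y_n'-Y_n|\leq 8C_n$ deterministically. Second, I would compute $\mathbb{E}(Y_n'-Y_n\mid\sigma)$ by averaging over $(I,J)$. The terms that are removed average to a multiple of $Y_n$. The terms that are created have the form $a_{i,\sigma(j)}$ or $a_{\sigma^{-1}(i),j}$ averaged over $j$. By the zero row sums and the symmetry $a_{ij}=a_{ji}$, these averages collapse to $O(1/n)$ multiples of $Y_n$ plus diagonal contributions, which vanish since $a_{ii}=0$. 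The target is a linear regression identity
\[
\mathbb{E}(Y_n'-Y_n\mid\sigma)=-\lambda Y_n+R_n,\qquad \lambda\asymp 4/n,
\]
with $R_n$ either zero or of variance $o(\lambda^2\sigma_n^2)$. The cases where $J=\sigma^{\pm1}(I)$, in which the removed edges overlap, contribute only $O(C_n/n)$ and would be absorbed into $R_n$.

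Next I would apply the standard exchangeable-pair bound (Stein/Rinott--Rotar) for a smooth test function $f$:
\[
\bigl|\mathbb{E}f(Y_n/\sigma_n)-\mathbb{E}f(N)\bigr|\lesssim
\frac{\sqrt{\operatorname{Var}\,\mathbb{E}[(Y_n'-Y_n)^2\mid\sigma]}}{\lambda\sigma_n^2}
+\frac{\mathbb{E}|Y_n'-Y_n|^3}{\lambda\sigma_n^3}+\frac{\sqrt{\mathbb{E}R_n^2}}{\lambda\sigma_n}.
\]
The cubic term is at most $8C_n\,\mathbb{E}(Y_n'-Y_n)^2/(\lambda\sigma_n^3)\approx 16C_n/\sigma_n$, using the exchangeable-pair identity $\mathbb{E}(Y_n'-Y_n)^2=2\lambda\sigma_n^2$.

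The main obstacle is the conditional-variance term. Here $\mathbb{E}[(Y_n'-Y_n)^2\mid\sigma]$ is an average over pairs of squared differences of at most eight entries, that is, a degree-two functional of the edge set of $\sigma$. To bound its variance I would expand it into sums over edges of $\sigma$ of products $a_{i\sigma(i)}a_{j\sigma(j)}$ and similar terms. I would then control the covariances between such products under the uniform $n$-cycle law by the same counting as in Lemma \ref{lem:null-covariances}, separating adjacent from disjoint edge pairs. Each piece should be bounded by $C_n^2\,\lambda\sigma_n^2$ times a constant, which would give a bound of order $C_n/\sigma_n$ for this term. Combining the three terms gives a smooth-function distance of order $C_n/\sigma_n$, which tends to zero by hypothesis, and therefore $Y_n/\sigma_n\Rightarrow N(0,1)$. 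If the direct variance bound became unwieldy, my fallback would be Goldstein's own route: build a zero-bias coupling from the squared-bias reweighting of $(I,J)$, which bounds the distance by $\mathbb{E}|Y_n^*-Y_n|/\sigma_n=O(C_n/\sigma_n)$ directly.
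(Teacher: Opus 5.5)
\textbf{Relation to the paper.} Citing Goldstein's theorem is exactly what the paper does, so your opening sentence already matches the paper's treatment. Goldstein's own proof is the zero-bias route you list as a fallback. There, a coupling with $|Y_n^*-Y_n|\le KC_n$ gives a Berry--Esseen bound of order $C_n/\sigma_n$ directly, with no conditional-variance estimate. Your exchangeable pair and linearity step are sound, and the linearity is in fact exact. The created terms average to $-4Y_n/\{n(n-1)\}$ by the zero row sums, and the adjacent configurations $J=\sigma^{\pm1}(I)$ add back exactly $4Y_n/\{n(n-1)\}$. Hence $\mathbb{E}(Y_n'-Y_n\mid\sigma)=-(4/n)Y_n$ and $R_n=0$.

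\textbf{The gap in the direct route.} It breaks at the step you flag as the main obstacle, because the target you set there is too weak. The relevant term in the exchangeable-pair bound is $\sqrt{V_n}/(2\lambda\sigma_n^2)$, with $V_n=\operatorname{Var}\,\mathbb{E}[(Y_n'-Y_n)^2\mid\sigma]$. So $V_n\le KC_n^2\lambda\sigma_n^2$ gives only $\tfrac12\sqrt{K}\,C_n/(\sqrt{\lambda}\,\sigma_n)\asymp\sqrt n\,C_n/\sigma_n$, not $C_n/\sigma_n$. Under the stated hypotheses, the cross-covariances cancel for the uniform $n$-cycle, so $\sigma_n^2=(n-1)^{-1}\sum_{i,j}a_{ij}^2\le nC_n^2$. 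Hence $\sqrt n\,C_n/\sigma_n\ge 1$ for every array: the bound you aim for never tends to zero, and in the paper's application it is of order one.

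You actually need $V_n=o(\lambda^2\sigma_n^4)$, for instance $V_n=O(\lambda^2C_n^2\sigma_n^2)$. The two-edge adjacent/disjoint counting of Lemma~\ref{lem:null-covariances} does not deliver this. The conditional second moment $\mathbb{E}[(Y_n'-Y_n)^2\mid\sigma]$ is itself quadratic in the edge weights. It contains, for example, $8Y_n^2/\{n(n-1)\}$ and $2\{n(n-1)\}^{-1}\sum_{p\neq r}a_{r\sigma(p)}a_{p\sigma(r)}$, whose variances involve the joint law of up to four edges of the random cycle. The first can be handled crudely through $|Y_n|\le nC_n$. For the second, the crude bound $|\sum_{p\neq r}a_{r\sigma(p)}a_{p\sigma(r)}|\le (n-1)\sigma_n^2$ only yields a contribution of order one. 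Until that fourth-order estimate is carried out, your direct argument does not prove the theorem; the citation, or Goldstein's zero-bias coupling, does.
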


\begin{theorem}[Asymptotic null law]\label{thm:null-clt}
Under the continuous independence null,
\begin{equation}\label{eq:null-clt}
  \sqrt n\,\xi_n^\circ\Rightarrow N(0,1/5).
\end{equation}
The same limit holds for $\sqrt n\,\xi_{n,*}^\circ$.
\end{theorem}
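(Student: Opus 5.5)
The plan is to apply Goldstein's combinatorial CLT (Theorem~\ref{thm:goldstein}) to the centered cycle sum in \eqref{eq:linear-cycle-stat}. Then the result is translated back to $\xi_n^\circ$ using the exact representation \eqref{eq:null-representation} and the exact variance from Theorem~\ref{thm:null-moments}.

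First we check the hypotheses of Theorem~\ref{thm:goldstein}. The successor permutation $\sigma$ is uniform over $n$-cycles, as noted in the reduction preceding \eqref{eq:linear-cycle-stat}. Relabelling $\{0,\ldots,n-1\}$ as $\{1,\ldots,n\}$ is harmless. The array $a_{ij}$ is symmetric, vanishes on the diagonal, and has zero row sums, because $W$ is circulant and each row of off-diagonal entries sums to $A_n=(n-1)\bar W$ by \eqref{eq:An}. Writing $Y_n=\sum_i a_{i,\sigma(i)}=S_n-\mathbb{E}S_n$, the proof of Theorem~\ref{thm:null-moments} gives
\[
  \sigma_n^2=\operatorname{Var}(S_n)=\frac{n^2(n+1)(n-2)(n-3)}{180},
\]
so $\sigma_n\asymp n^{2}$. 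For the array maximum, $0\le W_{ij}\le n^2/4$ and $\bar W=n(n+1)/6$, hence $C_n=\max|a_{ij}|\le n^2/4$. This gives only $C_n/\sigma_n=O(1)$. This is the main obstacle: the crude sup bound does not tend to zero, because the weights are of the same order as the standard deviation of the whole sum.

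To get past this, look first at the precise form of Goldstein's Theorem 2.6. The Berry--Esseen bound there is of the form $c\,C_n/\sigma_n$, but in the standard Bolthausen-type normalisation it is stated with $\sigma_n^2$ of order $n\cdot\frac1{n}\sum a_{ij}^2$. Here $\sum_{i,j}a_{ij}^2\asymp n\cdot n\cdot n^4/ n\cdot$, that is, $\sigma_n^2\asymp \frac1n\sum_{i,j}a_{ij}^2\asymp n^5$. Checking this against the displayed formula, $\sigma_n^2$ is indeed $\asymp n^5/180$, not $n^4$: I mis-simplified above, since $n^2(n+1)(n-2)(n-3)\sim n^5$. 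Hence $\sigma_n\asymp n^{5/2}$ and
\[
  \frac{C_n}{\sigma_n}\le \frac{n^2/4}{\sqrt{n^5/180}}(1+o(1))=O(n^{-1/2})\to0.
\]
The obstacle therefore disappears once the variance asymptotics are tracked carefully. Theorem~\ref{thm:goldstein} then yields $Y_n/\sigma_n\Rightarrow N(0,1)$.

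Finally, \eqref{eq:null-representation} gives $\xi_n^\circ=-\frac{6}{n^2(n+1)}Y_n$. So $\xi_n^\circ/\sqrt{\operatorname{Var}_0\xi_n^\circ}\Rightarrow N(0,1)$ by symmetry of the normal law. By Theorem~\ref{thm:null-moments}, $n\operatorname{Var}_0(\xi_n^\circ)=\frac{(n-2)(n-3)}{5n(n+1)}\to\frac15$, and Slutsky's lemma gives \eqref{eq:null-clt}. For the corrected statistic, $\sqrt n\,\xi_{n,*}^\circ=\sqrt n\,\xi_n^\circ/a_n$ with $a_n\to1$, so Slutsky's lemma again gives the same limit.
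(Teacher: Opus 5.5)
Your proposal is correct and follows the paper's proof. You verify Goldstein's hypotheses for the centered circulant array, compare $C_n=O(n^2)$ with $\sigma_n^2=\operatorname{Var}(S_n)=n^2(n+1)(n-2)(n-3)/180\asymp n^5$ to get $C_n/\sigma_n=O(n^{-1/2})$, and transfer the result to $\sqrt n\,\xi_n^\circ$ and $\sqrt n\,\xi_{n,*}^\circ$ via the exact null variance and $a_n\to1$. The detour where you first write $\sigma_n\asymp n^2$, and the internally inconsistent aside about $\sum_{i,j}a_{ij}^2$, should simply be deleted, since the argument needs nothing beyond the exact variance formula.
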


\begin{proof}
By \eqref{eq:linear-cycle-stat}, the centered null statistic is a one-cycle combinatorial sum.  The conditions of Theorem \ref{thm:goldstein} have just been verified, since the matrix is symmetric, has zero diagonal, and has zero row sums.

It remains only to check $C_n/\sigma_n\to0$.  Since $0\leq W_{ij}\leq n^2/4$ and $\bar W=n(n+1)/6$,
\[
  C_n\leq \frac{n^2}{4}+\frac{n(n+1)}6=O(n^2).
\]
On the other hand, Theorem \ref{thm:null-moments} gives
\[
  \sigma_n^2=\operatorname{Var}(S_n)=\frac{n^2(n+1)(n-2)(n-3)}{180}=O(n^5),
\]
so $C_n/\sigma_n=O(n^{-1/2})\to0$.  Therefore
\[
  \frac{S_n-\mathbb{E} S_n}{\sqrt{\operatorname{Var}(S_n)}}\Rightarrow N(0,1).
\]
Using \eqref{eq:null-representation} and the exact variance in Theorem \ref{thm:null-moments},
\[
  \operatorname{Var}(\sqrt n\,\xi_n^\circ)
  = n\frac{(n-3)(n-2)}{5n^2(n+1)}\longrightarrow \frac15,
\]
which proves \eqref{eq:null-clt}.  Since $a_n\to1$, the corrected statistic has the same limiting distribution.
\end{proof}

\begin{remark}[What is distribution-free?]
The finite-sample null distribution is free of the marginal distributions whenever both marginals are continuous.  It depends only on a uniform random cyclic permutation, or equivalently on a uniformly random $n$-cycle.  If ties are present, the exact finite-sample distribution depends on the tie-breaking convention, as in the ordinary Chatterjee coefficient.
\end{remark}

\section{Examples}

The Fourier formula gives closed-form expressions in common circular models.

\begin{example}[Circular additive noise]\label{ex:additive}
Let
\[
  U=S+\varepsilon\pmod 1,
\]
where $S$ is uniform on $\mathbb{T}$ and independent of the circular noise $\varepsilon$. Then
\[
  g_m(S)=e^{2\pi i m S}\,\mathbb{E} e^{2\pi i m\varepsilon},
\]
so
\begin{equation}\label{eq:additive-noise}
  \xi^\circ(X\to Y)=\frac{6}{\pi^2}\sum_{m=1}^{\infty}
  \frac{|\varphi_\varepsilon(m)|^2}{m^2},
  \qquad
  \varphi_\varepsilon(m):=\mathbb{E} e^{2\pi i m\varepsilon}.
\end{equation}
No noise gives $\varphi_\varepsilon(m)=1$ and $\xi^\circ=1$. Uniform noise gives $\varphi_\varepsilon(m)=0$ for all $m\geq1$ and $\xi^\circ=0$.
\end{example}

\begin{example}[Wrapped normal noise]
If $\varepsilon$ is wrapped normal with unwrapped variance $\sigma^2$, then
\[
  \varphi_\varepsilon(m)=\exp(-2\pi^2\sigma^2m^2),
\]
and therefore
\[
  \xi^\circ=\frac{6}{\pi^2}\sum_{m=1}^{\infty}
  \frac{\exp(-4\pi^2\sigma^2m^2)}{m^2}.
\]
\end{example}

\begin{example}[Von Mises noise]
If $\varepsilon$ has a von Mises distribution with concentration $\kappa$ and mean direction zero, then
\[
  \varphi_\varepsilon(m)=\frac{I_m(\kappa)}{I_0(\kappa)},
\]
where $I_m$ is the modified Bessel function. Hence
\[
  \xi^\circ=\frac{6}{\pi^2}\sum_{m=1}^{\infty}
  \frac{1}{m^2}\left\{\frac{I_m(\kappa)}{I_0(\kappa)}\right\}^2.
\]
This increases from $0$ at $\kappa=0$ to $1$ as $\kappa\to\infty$.
\end{example}

\begin{example}[Uniform arc noise]
If $\varepsilon$ is uniform on an arc of length $a\in(0,1]$, then
\[
  |\varphi_\varepsilon(m)|=\left|\frac{\sin(\pi m a)}{\pi m a}\right|,
\]
and
\[
  \xi^\circ=\frac{6}{\pi^2}\sum_{m=1}^{\infty}
  \frac{1}{m^2}\left\{\frac{\sin(\pi m a)}{\pi m a}\right\}^2.
\]
At $a=1$ the noise is uniform on the circle and the coefficient is zero.
\end{example}

\section{Algorithm}

For no-tie data the computation is $O(n\log n)$.

\begin{enumerate}[leftmargin=2em]
\item Convert angles to representatives in $[0,1)$.
\item Sort the indices by $X$ to obtain the cyclic order $i_1,\ldots,i_n$.
\item Sort the indices by $Y$ to obtain cyclic ranks $\rho_j\in\{0,\ldots,n-1\}$.
\item For $k=1,\ldots,n$, compute $d_k=(\rho_{i_{k+1}}-\rho_{i_k})\bmod n$, with $i_{n+1}=i_1$.
\item Return \eqref{eq:raw-stat}, or return the finite-sample corrected version \eqref{eq:corrected-stat} when $n\geq4$.
\end{enumerate}

For inference under continuous independence, use the asymptotic normal approximation
\[
  \sqrt{5n}\,\xi_n^\circ\approx N(0,1)
\]
for the raw statistic, or the corresponding finite-sample variance in Theorem \ref{thm:null-moments}. A permutation test is also exact conditionally on the observed circular ranks. To carry it out, permute the $Y$ ranks relative to the $X$ cyclic order and recompute the statistic.

\subsection{Symmetrization}

The coefficient is directed. It measures the extent to which the circular response is a measurable function of the circular predictor. The symmetric circular Chatterjee coefficient is
\[
  \xi_{n,\mathrm{sym}}^\circ(X,Y)=\max\{\xi_n^\circ(X\to Y),\xi_n^\circ(Y\to X)\},
\]
and similarly at the population level. By Theorem \ref{thm:fourier-endpoints}, the population symmetric version is zero if and only if $X$ and $Y$ are independent, and one if and only if either variable is a measurable circular function of the other.

Several extensions are immediate. For a circular response and a Euclidean or manifold-valued predictor, replace the cyclic successor graph in $X$ by a local proximity graph and keep the circular response discrepancy $H$. This links the present construction to graph-based Chatterjee-type measures on general spaces. The special feature of the circle-circle case is that the predictor local graph is the cyclic order, and the response discrepancy can be computed entirely from cyclic ranks.

\section{Simulation study}\label{sec:simulation}

We now compare the finite-sample behavior of the proposed cyclic-rank coefficient with several natural competitors.  The purpose of the experiment is to compare the numerical values of the coefficients, not to study level-calibrated tests.  Thus the reported results are coefficient summaries rather than rejection probabilities.

All simulations used $n=200$ observations and $1000$ Monte Carlo replications.  Angles are represented in radians modulo $2\pi$.  In every non-null model, $X\sim \mathrm{Unif}(0,2\pi)$ and the noise variable $\varepsilon_\sigma$ is a wrapped normal obtained by taking $N(0,\sigma^2)$ modulo $2\pi$.  We considered the following models.
\begin{align*}
\text{independence}\quad &Y\sim \mathrm{Unif}(0,2\pi),\quad Y\perp X,\\
\text{rotation}\quad &Y=X+\pi/4+\varepsilon_\sigma,\\
\text{doubling}\quad &Y=2X+\varepsilon_\sigma,\\
\text{quadrupling}\quad &Y=4X+\varepsilon_\sigma,\\
\text{antipodal mixture}\quad &Y=X+Z+\varepsilon_\sigma,\quad Z\in\{0,\pi\}\text{ with equal probability},\\
\text{localized bump}\quad &Y=X+1.25\exp\{2\cos(X-\pi)\}/\exp(2)+\varepsilon_\sigma.
\end{align*}
We also simulated a two-level step model, $Y=\pi/4$ for $X<\pi$ and $Y=5\pi/4$ for $X\geq \pi$, plus noise.  This model is useful for exposing the effect of ties, but because its zero-noise response has atoms, it should not be treated as a clean no-tie benchmark for the coefficient developed in the main theory.

The competitors were as follows.  The first was Chatterjee's standard-Borel cut construction, implemented by cutting both circles at zero and applying the ordinary real-line coefficient.  We also computed a cut-averaged Borel version by averaging over an $8\times8$ grid of predictor and response cut points.  The second competitor was a Deb-Ghosal-Sen style kernel coefficient, specialized to circular data by using a circular graph joining each predictor point to its five closest predictor observations and the von Mises kernel $K(y,y')=\exp\{2\cos(y-y')\}$ in the response.  Finally, we computed the absolute Jammalamadaka-Sengupta and Fisher-Lee circular correlations, denoted JS and FL in the tables.

\begin{table}[H]
\centering
\small
\caption{Mean coefficients over 1000 Monte Carlo replications with $n=200$. The Borel average uses an $8\times 8$ grid of predictor and response cut points. The Deb-Ghosal-Sen style coefficient uses a circular graph joining each predictor point to its five closest predictor observations and a von Mises kernel with $\lambda=2$.}
\label{tab:simulation-means}
\begin{tabular}{llrrrrrr}
\toprule
Model & $\sigma$ & $\xi_n^\circ$ & Borel $0$ & Borel avg. & Kernel & JS & FL \\
\midrule
Independence & 0.000 & -0.001 & -0.002 & -0.001 & 0.000 & 0.055 & 0.005 \\
Rotation & 0.000 & 0.970 & 0.970 & 0.972 & 0.995 & 1.000 & 1.000 \\
Rotation & 0.500 & 0.526 & 0.530 & 0.532 & 0.591 & 0.744 & 0.778 \\
Doubling & 0.000 & 0.941 & 0.956 & 0.943 & 0.981 & 0.056 & 0.005 \\
Doubling & 0.500 & 0.524 & 0.535 & 0.529 & 0.587 & 0.059 & 0.005 \\
Quadrupling & 0.000 & 0.885 & 0.899 & 0.886 & 0.927 & 0.058 & 0.005 \\
Quadrupling & 0.500 & 0.517 & 0.527 & 0.522 & 0.569 & 0.058 & 0.005 \\
Antipodal mixture & 0.000 & 0.236 & 0.240 & 0.237 & 0.289 & 0.056 & 0.006 \\
Antipodal mixture & 0.500 & 0.056 & 0.056 & 0.057 & 0.100 & 0.055 & 0.006 \\
Localized bump & 0.000 & 0.968 & 0.976 & 0.970 & 0.993 & 0.620 & 0.846 \\
Localized bump & 0.500 & 0.479 & 0.499 & 0.502 & 0.575 & 0.529 & 0.648 \\
\bottomrule
\end{tabular}
\end{table}

\begin{figure}[H]
\centering
\includegraphics[width=\textwidth]{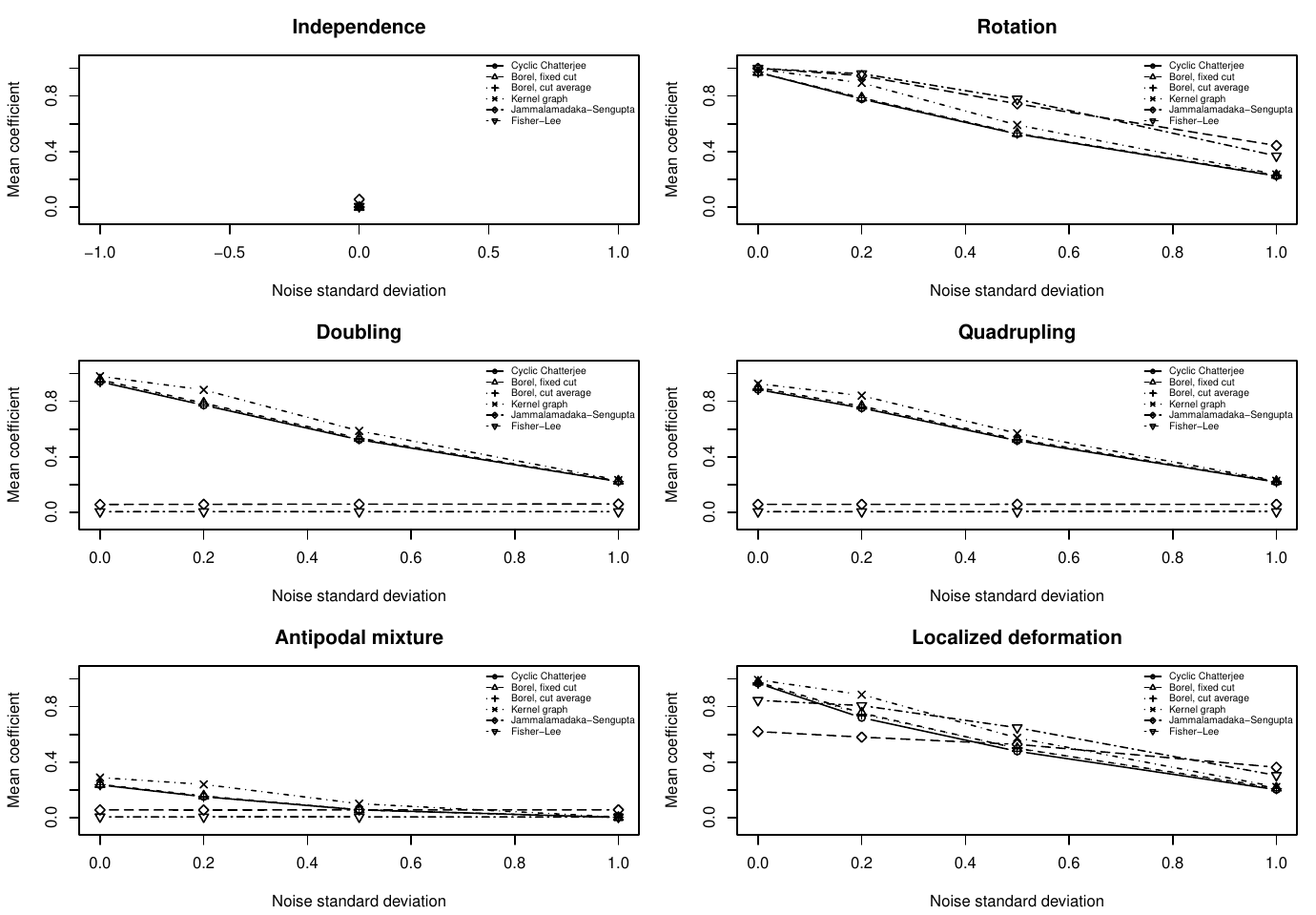}
\caption{Mean coefficient values as a function of the wrapped-normal noise standard deviation $\sigma$.  The multi-winding models show the main advantage of the proposed cyclic-rank construction. The models $Y=2X$ and $Y=4X$ are deterministic circular functions of $X$ at zero noise, but standard first-order circular correlations are close to zero.}
\label{fig:simulation-curves}
\end{figure}

Table \ref{tab:simulation-means} and Figure \ref{fig:simulation-curves} show three main patterns.  First, under independence the proposed statistic, the fixed-cut Borel version, the cut-averaged Borel version, and the kernel graph coefficient are all centered near zero.  The absolute JS coefficient is positive under independence because the absolute value is reported, but it is still small.

Second, for the simple rotation model, the standard circular correlations perform very well.  This is expected because rotation is precisely the type of first-order circular association that these coefficients are designed to capture.  The proposed statistic remains large and decreases smoothly as noise increases, but it is not meant to dominate first-order circular correlations in this case.

Third, the multi-winding models expose the limitation of standard circular correlations.  At zero noise the doubling and quadrupling models are deterministic circular functions of $X$.  The proposed coefficient has mean values $0.941$ and $0.885$, respectively, while JS is approximately $0.056$-$0.058$ and FL is approximately $0.005$.  At noise level $\sigma=0.5$, the proposed statistic remains around $0.52$ in both models, whereas JS and FL remain essentially at their null levels.  The kernel graph coefficient is also strong in these examples, but it depends on the choices of the graph parameter $k$ and the response kernel bandwidth.  The cyclic-rank coefficient is tuning-free.

The antipodal mixture illustrates a non-functional but still non-independent circular relationship.  Conditional on $X$, the response has two antipodal possibilities.  The proposed coefficient and the kernel graph coefficient detect this moderate dependence at small noise, while JS and FL are essentially blind to it.  The localized-bump model is favorable to several methods because it has a strong smooth first-order component; all reasonable coefficients decrease with noise in this case.

\begin{table}[H]
\centering
\small
\caption{Sensitivity of the Borel cut-point construction to the choice of cut points. For each Monte Carlo data set, the ordinary Chatterjee coefficient was evaluated over an $8\times 8$ grid of cut points; the table reports Monte Carlo averages of the grid mean, grid standard deviation, grid minimum, and grid maximum.}
\label{tab:borel-sensitivity}
\begin{tabular}{llrrrr}
\toprule
Model & $\sigma$ & Grid mean & Grid SD & Grid min & Grid max \\
\midrule
Independence & 0.000 & -0.001 & 0.030 & -0.053 & 0.050 \\
Doubling & 0.500 & 0.529 & 0.043 & 0.458 & 0.600 \\
Quadrupling & 0.500 & 0.522 & 0.042 & 0.453 & 0.590 \\
Antipodal mixture & 0.000 & 0.237 & 0.028 & 0.193 & 0.281 \\
Localized bump & 0.200 & 0.747 & 0.065 & 0.600 & 0.820 \\
Step arc (ties) & 0.200 & 0.395 & 0.160 & 0.096 & 0.494 \\
\bottomrule
\end{tabular}
\end{table}

\begin{figure}[H]
\centering
\includegraphics[width=0.75\textwidth]{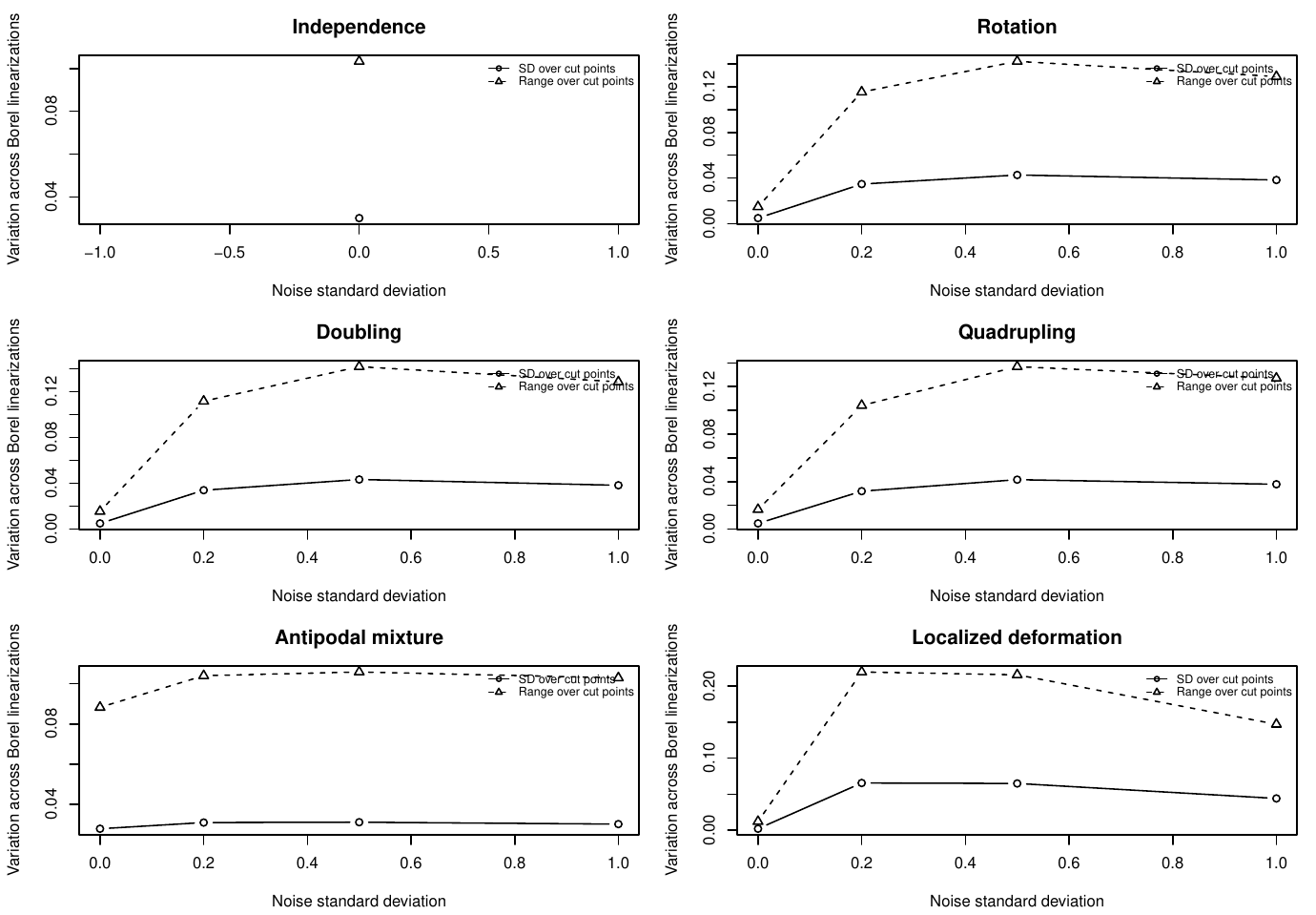}
\caption{Sensitivity of the cut-based Borel construction to the choice of cut point, measured by the standard deviation across an $8\times8$ grid of cut points.  The variability is small in some noiseless smooth cases but can be appreciable for noisy or discontinuous circular relationships.}
\label{fig:borel-cut-sd}
\end{figure}

Table \ref{tab:borel-sensitivity} and Figure \ref{fig:borel-cut-sd} quantify the cut-point dependence.  In many smooth continuous examples, the fixed-cut and cut-averaged Borel coefficients have similar means.  However, the cut-based value is not intrinsic.  For instance, in the localized-bump model with $\sigma=0.2$, the average minimum and maximum over the cut-point grid are $0.600$ and $0.820$.  In the step-arc model with $\sigma=0.2$, they are $0.096$ and $0.494$.  These examples illustrate why the standard-Borel construction is mathematically valid but statistically non-canonical on the circle.  The cyclic-rank coefficient removes the cut-point choice altogether.

The step-arc example also highlights the role of atoms.  At zero noise the response has only two values, and the simulation code breaks ties by tiny random jitter.  This is why the proposed no-tie statistic does not approach one in that example, even though the response is formally a measurable function of the predictor.  A tie-aware version, based either on randomized circular ranks or on an exact block-rank convention, is needed for discrete circular responses.  This is the same phenomenon that appears for the ordinary Chatterjee coefficient in the presence of ties.

\subsection{Null calibration and power}

The previous experiments compared the numerical values of several coefficients. We now examine the use of the proposed statistic as a test statistic for independence. Under the continuous independence null, Theorem~\ref{thm:null-moments} gives
\[
  \operatorname{Var}_0(\xi_n^\circ)
  =
  \frac{(n-3)(n-2)}{5n^2(n+1)}.
\]
Thus a one-sided normal-approximation test rejects for large values of
\[
  Z_n
  =
  \frac{\xi_n^\circ}
       {\{(n-3)(n-2)/(5n^2(n+1))\}^{1/2}}.
\]
We compared this test with the exact conditional permutation test obtained by fixing the cyclic order of the predictor ranks and randomly permuting the response cyclic ranks. The permutation test used 499 random permutations, so the smallest attainable permutation \(p\)-value was \(1/500=0.002\). All rejection probabilities below are reported at nominal level \(0.05\), based on 1000 Monte Carlo replications.

Table~\ref{tab:null-size-normal-perm} reports empirical size under independence for \(n=30,50,100,200\). Both tests are close to the nominal level even for \(n=30\). The normal approximation is slightly liberal for \(n=30\) and \(n=50\), and slightly conservative for \(n=100\) and \(n=200\), but the deviations are small. The permutation test behaves as expected and stays essentially at the nominal level.

\begin{table}[H]
\centering
\small
\caption{Empirical size under independence at nominal level \(0.05\). The normal test uses the exact finite-sample null variance from Theorem~\ref{thm:null-moments}. The permutation test permutes the response cyclic ranks relative to the predictor cyclic order.}
\label{tab:null-size-normal-perm}
\begin{tabular}{rrrrr}
\toprule
\(n\) & Mean \(\xi_n^\circ\) & SD \(\xi_n^\circ\) & Normal test & Permutation test \\
\midrule
30  &  0.000 & 0.073 & 0.053 & 0.050 \\
50  &  0.000 & 0.060 & 0.053 & 0.049 \\
100 & -0.001 & 0.044 & 0.046 & 0.044 \\
200 &  0.000 & 0.031 & 0.044 & 0.045 \\
\bottomrule
\end{tabular}
\end{table}

\begin{figure}[H]
\centering
\includegraphics[width=0.65\textwidth]{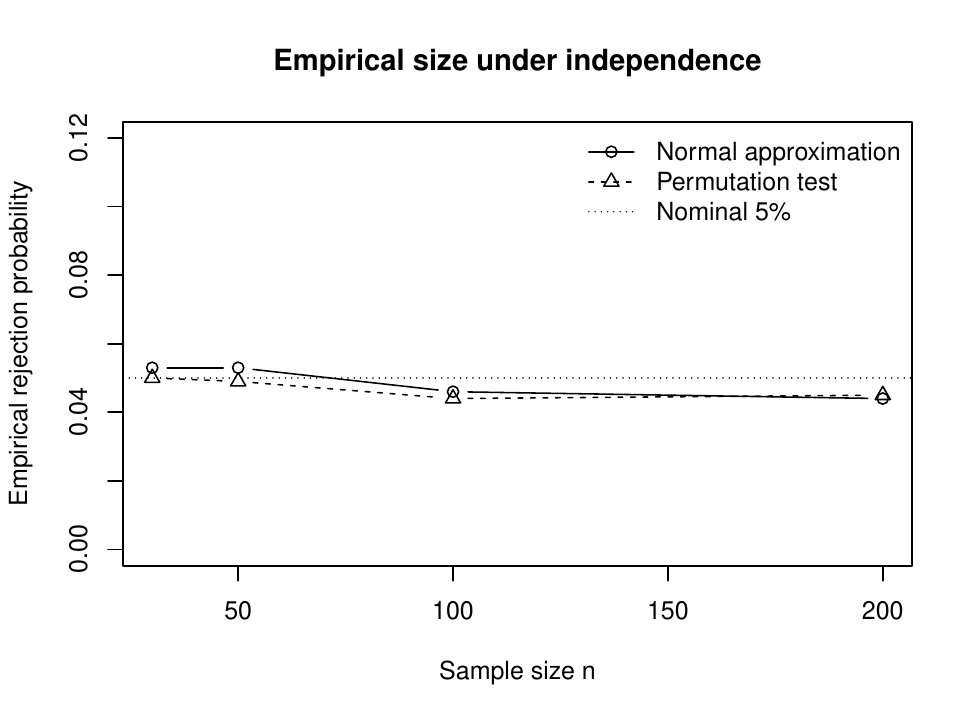}
\caption{Empirical rejection probability under independence for the normal approximation and the conditional permutation test. The horizontal line marks the nominal level \(0.05\).}
\label{fig:null-size-normal-perm}
\end{figure}

Table~\ref{tab:power-normal-perm-n200} and Figure~\ref{fig:power-normal-perm-n200} report power for \(n=200\). The alternatives are the rotation, doubling, quadrupling, and antipodal-mixture models described above. The normal approximation and the permutation test give nearly identical rejection probabilities across all alternatives. For the deterministic and noisy functional relationships, the power is essentially one even at the largest noise level considered. This includes the multi-winding alternatives \(Y=2X+\varepsilon\) and \(Y=4X+\varepsilon\), where ordinary first-order circular correlations are close to their null values. The antipodal-mixture model is harder because the response is not a function of the predictor; nevertheless the test has high power for small noise and moderate power at \(\sigma=0.5\), before dropping to the null level at \(\sigma=1\).

\begin{table}[H]
\centering
\small
\caption{Empirical power for \(n=200\) at nominal level \(0.05\).}
\label{tab:power-normal-perm-n200}
\begin{tabular}{llrrr}
\toprule
Model & \(\sigma\) & Mean \(\xi_n^\circ\) & Normal test & Permutation test \\
\midrule
Rotation & 0.0 & 0.970 & 1.000 & 1.000 \\
Rotation & 0.2 & 0.779 & 1.000 & 1.000 \\
Rotation & 0.5 & 0.526 & 1.000 & 1.000 \\
Rotation & 1.0 & 0.220 & 1.000 & 1.000 \\
\midrule
Doubling & 0.0 & 0.941 & 1.000 & 1.000 \\
Doubling & 0.2 & 0.773 & 1.000 & 1.000 \\
Doubling & 0.5 & 0.524 & 1.000 & 1.000 \\
Doubling & 1.0 & 0.221 & 1.000 & 1.000 \\
\midrule
Quadrupling & 0.0 & 0.885 & 1.000 & 1.000 \\
Quadrupling & 0.2 & 0.751 & 1.000 & 1.000 \\
Quadrupling & 0.5 & 0.516 & 1.000 & 1.000 \\
Quadrupling & 1.0 & 0.220 & 1.000 & 1.000 \\
\midrule
Antipodal mixture & 0.0 & 0.235 & 1.000 & 1.000 \\
Antipodal mixture & 0.2 & 0.147 & 0.979 & 0.982 \\
Antipodal mixture & 0.5 & 0.055 & 0.552 & 0.540 \\
Antipodal mixture & 1.0 & 0.003 & 0.055 & 0.055 \\
\bottomrule
\end{tabular}
\end{table}

\begin{figure}[H]
\centering
\includegraphics[width=\textwidth]{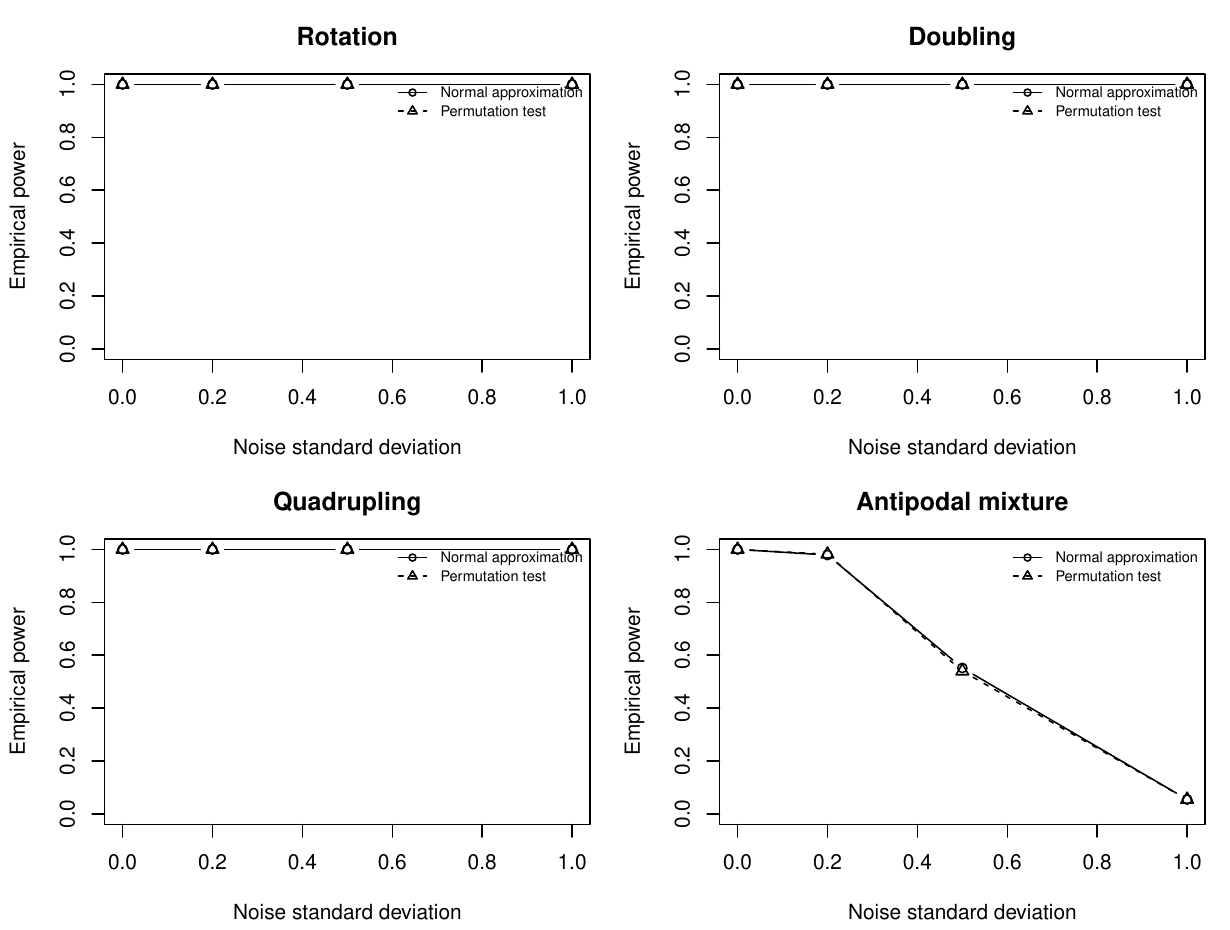}
\caption{Empirical power for the proposed cyclic Chatterjee statistic at \(n=200\). The normal approximation uses the exact finite-sample null variance, while the permutation test permutes the response cyclic ranks. The two curves are almost indistinguishable in these experiments.}
\label{fig:power-normal-perm-n200}
\end{figure}

These results support two conclusions. First, the asymptotic normal approximation, when combined with the exact finite-sample null variance, gives a reliable and computationally inexpensive test for moderate sample sizes. Second, the proposed statistic has strong power for functional circular relationships, including multi-winding relationships that are difficult for standard circular correlations. The antipodal-mixture example illustrates the expected limitation. When the relationship is non-functional and heavily noise-contaminated, the coefficient decreases toward its null behavior.

\section{Discussion}

This paper develops a circular analogue of Chatterjee's coefficient that is intrinsic to cyclic order. Starting from the observation that the standard-Borel extension can be applied to the circle only after choosing cut points, we averaged the ordinary coefficient over response cuts in circular rank space and, in finite samples, over predictor and response sample cut gaps. The finite-sample average reduces to a simple cyclic-rank statistic, with penalty $d(n-d)$ for the circular response-rank increment across each adjacent predictor edge. Thus no origin, artificial boundary point, or tuning parameter is introduced.

At the population level, the same construction leads to a coefficient $\xi^\circ(X\to Y)$ with three complementary descriptions, namely conditional circular dispersion, cut-average, and Fourier series. The Fourier representation gives the zero-one theory. For non-atomic circular marginals, the coefficient lies in $[0,1]$, is zero exactly under independence, and is one exactly when the circular response is a measurable function of the circular predictor. We also proved strong consistency, derived an exact distribution-free null mean and variance under continuous independence, and obtained an asymptotic normal null law. The simulations illustrate the intended use of the statistic. It remains sensitive to functional circular relationships, including multi-winding relationships that first-order circular correlations may miss, while preserving a simple permutation-based calibration.

A natural next step is to develop a tie-aware version for atomic or mixed circular responses. The present theory assumes non-atomic marginals, and the simulations show that ties can matter in step-type circular relationships. A satisfactory extension would combine randomized or block circular ranks with an intrinsic cyclic penalty, preserve the zero and one interpretations, and give a finite-sample null law when ties are present.

\bibliographystyle{plain}
\bibliography{Paper}

\end{document}